\newtheorem{theorem}{Theorem}[section]
\newtheorem{lemma}[theorem]{Lemma}
\newtheorem{proposition}[theorem]{Proposition}
\newtheorem{conjecture}[theorem]{Conjecture}
\theoremstyle{definition}
\newtheorem{definition}[theorem]{Definition}
\theoremstyle{remark}
\renewcommand{\theclaim}{\textup{\theclaim}}
\numberwithin{equation}{section}
\def\openone
\newbox\ipbox
\newcommand{\ip}[2]{\left\langle #1\, , \,#2\right\rangle}
\newcommand{\diracb}[1]{\left\langle #1\mathrel{\mathchoice
		
		{\setbox\ipbox=\hbox{$\displaystyle \left\langle\mathstrut
				#1\right.$}
			
			\vrule height\ht\ipbox width0.25pt depth\dp\ipbox}
		
		{\setbox\ipbox=\hbox{$\textstyle \left\langle\mathstrut
				#1\right.$}
			
			\vrule height\ht\ipbox width0.25pt depth\dp\ipbox}
		
		{\setbox\ipbox=\hbox{$\scriptstyle \left\langle\mathstrut
				#1\right.$}
			
			\vrule height\ht\ipbox width0.25pt depth\dp\ipbox}
		
		{\setbox\ipbox=\hbox{$\scriptscriptstyle \left\langle\mathstrut
				#1\right.$}
			
			\vrule height\ht\ipbox width0.25pt depth\dp\ipbox}
		
	}\right. }
\newcommand{\dirack}[1]{\left. \mathrel{\mathchoice
		
		{\setbox\ipbox=\hbox{$\displaystyle \left.\mathstrut
				#1\right\rangle$}
			
			\vrule height\ht\ipbox width0.25pt depth\dp\ipbox}
		
		{\setbox\ipbox=\hbox{$\textstyle \left.\mathstrut
				#1\right\rangle$}
			
			\vrule height\ht\ipbox width0.25pt depth\dp\ipbox}
		
		{\setbox\ipbox=\hbox{$\scriptstyle \left.\mathstrut
				#1\right\rangle$}
			
			\vrule height\ht\ipbox width0.25pt depth\dp\ipbox}
		
		{\setbox\ipbox=\hbox{$\scriptscriptstyle \left.\mathstrut
				#1\right\rangle$}
			
			\vrule height\ht\ipbox width0.25pt depth\dp\ipbox}
		
	} #1\right\rangle}
\newcommand{\beq}{\begin{equation}}
	\newcommand{\eeq}{\end{equation}}
\newcommand{\cj}[1]{\overline{#1}}
\newcommand{\bz}{\mathbb{Z}}
\newcommand{\br}{\mathbb{R}}
\newcommand{\bn}{\mathbb{N}}
\def\blfootnote{\xdef\@thefnmark{}\@footnotetext}
\renewcommand{\mod}{\operatorname{mod}}
\newcommand{\Per}{\operatorname*{Per}}
\def\-{^{-1}}
\def\ty{\emptyset}
\DeclareSymbolFont{cyrletters}{OT2}{wncyr}{m}{n}
\DeclareMathSymbol{\Sha}{\mathalpha}{cyrletters}{"58}
\newcommand{\Leb}{\operatorname*{Leb}}
\begin{document}
	
	\title[Some Plancherel identities for unbounded subsets of $\br$ in duality]{Some Plancherel identities for unbounded subsets of $\br$ in duality}

	\author{Piyali Chakraborty}
	\address{[Piyali Chakraborty] University of Central Florida\\
		Department of Mathematics\\
		4000 Central Florida Blvd.\\
		P.O. Box 161364\\
		Orlando, FL 32816-1364\\
		U.S.A.\\} \email{Piyali.Chakraborty@ucf.edu}

	\author{Dorin Ervin Dutkay}
	\address{[Dorin Ervin Dutkay] University of Central Florida\\
		Department of Mathematics\\
		4000 Central Florida Blvd.\\
		P.O. Box 161364\\
		Orlando, FL 32816-1364\\
		U.S.A.\\} \email{Dorin.Dutkay@ucf.edu}

	\subjclass[2010]{47E05,42A16}
	\keywords{differential operator, self-adjoint operator, Fourier bases, Fourier transform, Plancherel identity, Fuglede conjecture, lattice, tile}

	\begin{abstract}
		In relation to Fuglede's conjecture, we establish several Plancherel-type identities and demonstrate the surjectivity of the Fourier transform between certain unbounded tiling sets of $\mathbb{R}$ that are in duality. In the terminology commonly used in the context of Fuglede's conjecture, our result states that an open set tiles $\mathbb{R}$ by the finite set $\{0,1,\dots,p-1\}$ if and only if it admits a spectrum (or, equivalently, a dual pair measure) given by the Lebesgue measure on $\left[-\tfrac{1}{2p}, \tfrac{1}{2p}\right] + \mathbb{Z}$.

	\end{abstract}
	\maketitle
	\tableofcontents
	\newcommand{\Ds}{\mathsf{D}}
	\newcommand{\Dmax}{\mathscr D_{\operatorname*{max}}}
	\newcommand{\Dmin}{\Ds_{\operatorname*{min}}}
	\newcommand{\dom}{\operatorname*{dom}}

	\section{Introduction}
	In 1958, Irving Segal posed the following question to Bent Fuglede:  
	Let $\Omega$ be an open subset of $\br^d$. Consider the partial differential operators  
	$$D_1=\frac1{2\pi i}\frac\partial{\partial x_1},\dots,D_d=\frac1{2\pi i}\frac\partial{\partial x_d},$$  
	defined on the space $C_0^\infty(\Omega)$ of smooth, compactly supported functions on $\Omega$. Under what conditions do these differential operators admit commuting (unbounded) self-adjoint extensions $H_1,\dots,H_d$ on $L^2(\Omega)$, where commutation is understood in the sense of their spectral measures?
	
	In his seminal 1974 paper \cite{Fug74}, Fuglede provided an answer to Segal’s question in the case where $\Omega$ is a \textit{connected}, \textit{finite-measure}, \textit{Nikodym domain}—that is, when the Poincar\'e inequality holds:
	
	\begin{theorem}\label{th2.1}\cite[Theorem I]{Fug74}
		Let $\Omega\subset\br^d$ be an open, connected Nikodym region of finite measure. Define  
		$$e_\lambda(x)=e^{2\pi i\lambda\cdot x},\quad (x\in\br^d,\lambda\in\br^d).$$
		There exist commuting self-adjoint extensions of the differential operators $\{D_j: j=1,\dots,d\}$ if and only if there exists a subset $\Lambda$ of $\br^d$ such that the family of exponential functions
		$$\{e_\lambda :\lambda\in\Lambda\}$$
		forms an orthogonal basis for $L^2(\Omega)$.
		
	\end{theorem}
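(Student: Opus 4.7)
The plan is to handle the two implications separately, since they have quite different flavors: the ``if'' direction is a clean spectral construction, while the ``only if'' direction requires extracting a discrete joint spectrum from abstract commuting self-adjoint operators.

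For the sufficiency direction, I would assume $\{e_\lambda : \lambda\in\Lambda\}$ is an orthogonal basis of $L^2(\Omega)$ and build the operators directly. Set
\[
\dom(H_j) = \Bigl\{f = \sum_{\lambda\in\Lambda} c_\lambda e_\lambda \,:\, \sum_\lambda |c_\lambda|^2 \|e_\lambda\|^2 (1+\lambda_j^2) < \infty\Bigr\},
\]
with $H_j f = \sum_\lambda \lambda_j c_\lambda e_\lambda$. Each $H_j$ is diagonal in a common orthogonal basis, hence self-adjoint, and the family automatically commutes in the spectral sense. To verify that $H_j$ extends $D_j$, take $\phi\in C_0^\infty(\Omega)$, expand $\phi = \sum c_\lambda e_\lambda$, and use integration by parts (valid because $\phi$ has compact support inside $\Omega$) to show that the Fourier coefficients of $D_j\phi$ are $\lambda_j c_\lambda$; this places $\phi$ in $\dom(H_j)$ with $H_j\phi = D_j\phi$.

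For the necessity direction, suppose commuting self-adjoint extensions $H_1,\dots,H_d$ exist. The spectral theorem for commuting self-adjoint operators furnishes a joint projection-valued measure $E$ on $\br^d$ with $H_j = \int\lambda_j\,dE(\lambda)$; this assembles a strongly continuous unitary representation $U(t) = \exp(2\pi i\, t\cdot H)$ of $\br^d$ on $L^2(\Omega)$. Because each $H_j$ extends $D_j$, an approximation argument on $C_0^\infty(\Omega)$ shows that $U(t)$ acts as a ``localized translation'': for $\phi\in C_0^\infty(\Omega)$ and $t$ small enough that the classical translate $\tau_t\phi$ still has support in $\Omega$, one has $U(t)\phi = \tau_t\phi$. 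Any joint eigenvector $f$ with $H_j f = \lambda_j f$ then satisfies $\partial_{x_j} f = 2\pi i \lambda_j f$ distributionally on the connected domain $\Omega$, which forces $f$ to be a scalar multiple of $e_\lambda|_\Omega$. So every atom of $E$ produces an orthogonal exponential, with orthogonality inherited from the orthogonality of distinct joint spectral subspaces.

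The crux, and the main obstacle, is to prove that $E$ is purely atomic. Here the combined hypotheses of finite measure, connectedness, and the Nikodym/Poincar\'e condition are indispensable. The strategy I would pursue is to produce a compact abelian group acting on $\Omega$: using unitarity of each $U(t)$ together with $|\Omega|<\infty$, one argues that the ``local translation'' structure above must globally ``wrap'' inside $\Omega$, so that the kernel of the representation $U$ is a closed subgroup of $\br^d$ of full rank, making the quotient compact. The Peter--Weyl decomposition of $L^2(\Omega)$ for this quotient is precisely the orthogonal exponential basis sought. The Poincar\'e inequality enters at two critical points: in promoting distributional eigenfunctions to honest $L^2$ eigenfunctions (so that the joint eigenspaces are nontrivial and indeed spanned by $e_\lambda$), and in ruling out a continuous spectral part that could otherwise appear on pathological domains.
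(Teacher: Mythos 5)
First, note that the paper does not prove this statement at all: it is quoted verbatim as Fuglede's Theorem I from \cite{Fug74}, so there is no in-paper argument to compare yours against; what follows assesses your sketch on its own terms. Your sufficiency direction is fine and standard (diagonal operators in the orthogonal exponential basis, with integration by parts showing $C_0^\infty(\Omega)\subset\dom(H_j)$ and $H_j\phi=D_j\phi$); indeed this half needs neither connectedness nor the Nikodym hypothesis.

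The genuine gap is exactly at the step you flag as the crux, and the mechanism you propose for it does not work. You claim that unitarity plus $|\Omega|<\infty$ forces the kernel of the representation $U(t)=e^{2\pi i\,t\cdot H}$ to be a closed subgroup of $\br^d$ of full rank, so that $U$ factors through a compact quotient and Peter--Weyl yields the atomic decomposition. This is false in general: take $\Omega=[0,1]^2$ (open square; connected, finite measure, Nikodym) with the non-lattice spectrum $\Lambda=\{(n,m+\theta_n):n,m\in\bz\}$ for generic shifts $\theta_n$, and let $H_1,H_2$ be the associated diagonal extensions. These satisfy all hypotheses of the theorem, yet the kernel of $U$ is $\bz\times\{0\}$, which has rank one, and the quotient $\br^2/(\bz\times\{0\})$ is not compact. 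So atomicity of the joint spectral measure cannot be obtained by exhibiting a compact group action; any argument along those lines would ``prove'' that every spectrum lies in a dual lattice, which is not true. A correct proof of necessity (Fuglede's) instead works directly with the local-translation property of $U$ on $C_0^\infty(\Omega)$, and uses connectedness together with the Poincar\'e/Nikodym inequality to show that vectors in the range of spectral projections over small sets are, up to controlled error, scalar multiples of restricted exponentials, letting the sets shrink to produce eigenvectors and then establishing completeness; finiteness of $|\Omega|$ ensures the exponentials are in $L^2(\Omega)$. Your two asserted uses of the Poincar\'e inequality (upgrading distributional eigenfunctions and ``ruling out continuous spectrum'') are statements of what must be proved, not arguments, and with the compact-quotient route removed the necessity direction has no proof.
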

	
	\begin{definition}
		\label{defsp}
		A measurable set $\Omega\subset\br^d$ of finite Lebesgue measure is called \textit{spectral} if there exists a set $\Lambda\subset\br^d$ such that the family of exponential functions  
		$$\{e_\lambda : \lambda\in\Lambda\}$$  
		forms an orthogonal basis for $L^2(\Omega)$. In this case, $\Lambda$ is called a \textit{spectrum} for $\Omega$.
		
		We say that $\Omega$ \textit{tiles} $\br^d$ by translations if there exists a set $\mathcal T\subset\br^d$ such that the translates $\{\Omega+t : t\in\mathcal T\}$ form a partition of $\br^d$ up to measure zero. The set $\mathcal T$ is then called a \textit{tiling set} for $\Omega$, and we say that $\Omega$ \textit{tiles $\br^d$ by} $\mathcal T$.
	\end{definition}
	
	Thus, in this framework, when $\Omega$ is connected, has finite measure, and is a Nikodym domain, there exist commuting self-adjoint extensions of the differential operators $\{D_j\}$ if and only if $\Omega$ is spectral.  
	Because this characterization is somewhat abstract, Fuglede proposed his famous conjecture:
	
	\begin{conjecture}{\bf[Fuglede's Conjecture]}
		\label{conFu}
		A measurable subset $\Omega\subset\br^d$ of finite Lebesgue measure is spectral \textit{if and only if} it tiles $\br^d$ by translations.
	\end{conjecture}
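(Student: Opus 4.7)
The plan is to address the two implications of the conjecture separately, with the honest caveat that the statement as written is now known to fail in high dimensions (Tao, and subsequently Kolountzakis--Matolcsi and Farkas--Matolcsi--Mora, constructed counterexamples in $\br^d$ for $d\ge 3$ in both directions). A realistic proof plan therefore has to aim at a regime where the equivalence can hold: low dimension, convex $\Omega$ (settled by Lev--Matolcsi), or a prescribed structure for the tiling set or the spectrum, as in the present paper.

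For the direction \emph{tiling implies spectral}, I would begin from a tiling set $\mathcal T$ and try to manufacture a spectrum $\Lambda$ as a Fourier dual of $\mathcal T$. Since orthogonality of $\{e_\lambda:\lambda\in\Lambda\}$ in $L^2(\Omega)$ is equivalent to the difference set $\Lambda-\Lambda\setminus\{0\}$ lying in the zero set of $\widehat{\openone_\Omega}$, the plan is to read off candidates for $\Lambda$ from $\{\widehat{\openone_\Omega}=0\}$. The tiling hypothesis contributes the identity $\sum_{t\in\mathcal T}\openone_\Omega(\cdot-t)=1$ a.e., which via a Poisson-type summation forces strong cancellation in $\widehat{\openone_\Omega}$ along a set dual to $\mathcal T$; when $\mathcal T$ is a lattice this yields $\Lambda=\mathcal T^*$ immediately, and otherwise one would replace Poisson by a distributional version. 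Completeness of the resulting exponential system would be established through a Plancherel-type identity $\sum_{\lambda\in\Lambda}|\widehat{\openone_\Omega}(\xi-\lambda)|^2=|\Omega|^2$ a.e., which is precisely the kind of identity the body of this paper establishes in its one-dimensional setting.

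For the direction \emph{spectral implies tiling}, the plan is to start from a spectrum $\Lambda$ and build a tiling complement $\mathcal T$ by inverting the Fourier picture: construct an auxiliary positive measure $\mu$ supported on a candidate $\mathcal T$ whose Fourier transform is governed by $\Lambda$, and then verify the convolution identity $\openone_\Omega*\mu=1$. In the lattice case, duality produces $\mathcal T$ as $\Lambda^*$ and the identity follows from Parseval applied to the spectrum; in the general case the argument requires an intermediate structure measure and careful control of its support.

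The main obstacle is exactly the one responsible for the failure of the conjecture beyond special cases: without additional structure, there is no mechanism that ties the zero set of $\widehat{\openone_\Omega}$ to a discrete geometric object of the right kind on either side, and one can construct spectra that are too unstructured to come from a tiling as well as tilings whose putative spectral dual fails to be uniformly discrete. Accordingly, the only version of the plan I expect to execute successfully is one in which additional hypotheses are imposed---for instance restricting to $d=1$ with tiling set $\{0,1,\dots,p-1\}$ and spectrum $[-\tfrac{1}{2p},\tfrac{1}{2p}]+\bz$, as in the main theorem of this paper---where the required Plancherel identities can be verified by direct computation and the Fourier transform between $\Omega$ and its dual can be shown to be a surjective isometry.
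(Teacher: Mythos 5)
This statement is labeled a conjecture in the paper, and the paper neither proves it nor could: as the introduction itself notes, the conjecture fails in both directions for $d\ge 3$ \cite{Tao04, FMM06}, so no correct proof of the statement as written exists. Your proposal correctly recognizes this and declines to claim a proof, which is the right call. The tractable special case you fall back on---dimension one, tiling set $\{0,1,\dots,p-1\}$, dual measure supported on $\left[-\frac{1}{2p},\frac{1}{2p}\right]+\bz$, with the equivalence established through Plancherel-type identities of the form $\sum_{\lambda}|\hat\chi_\Omega(\xi-\lambda)|^2=|\Omega|^2$ and a surjective isometry of the restricted Fourier transform---is precisely the content and the method of the paper's Theorem \ref{thm}, so for the only provable portion of the statement your plan matches the paper's approach.
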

	
	The conjecture was later shown to be false in dimensions $d\ge3$ \cite{Tao04, FMM06}, but it remains true for convex domains \cite{LM22}.
	
	In \cite{Ped87}, Steen Pedersen extended Fuglede’s result by removing both the Nikodym condition and the finite-measure assumption. Naturally, the definition of a spectral set had to be adapted for infinite-measure domains (since $e_\lambda$ is not square-integrable in that case).
	
	\begin{definition}\label{defp1}
		For a function $f\in L^1(\br^d)$, define its classical Fourier transform as  
		$$\hat f(x)=\int_{\br^d}f(x)e^{-2\pi it\cdot x}\,dx,\quad(t\in\br^d).$$
		
		Let $\Omega\subset\br^d$ be measurable and let $\mu$ be a positive Radon measure on $\br^d$. We say that $(\Omega,\mu)$ is a \textit{spectral pair} if:  
		(1) for each $f\in L^1(\Omega)\cap L^2(\Omega)$, the continuous function $t\mapsto \hat f(t)$ satisfies $\int|\hat f|^2\,d\mu<\infty$; and  
		(2) the mapping $f\mapsto \hat f$ from $L^1(\Omega)\cap L^2(\Omega)\subset L^2(\Omega)$ into $L^2(\mu)$ is isometric and has dense range.
		
		This map then extends by continuity to an isometric isomorphism  
		$$\mathscr F:L^2(\Omega)\to L^2(\mu).$$
		
		The isometry property means that we have a Plancherel identity implemented by taking the Fourier transform of functions in $L^2(\Omega)$ and restricting it to the support of the measure $\mu$ and integrating the absolute value squared of this restriction of the Fourier transform against the measure $\mu$.
		
		The set $\Omega$ is called \textit{spectral} if there exists a measure $\mu$ such that $(\Omega,\mu)$ forms a spectral pair; $\mu$ is then called a \textit{pair measure} or a {\it dual measure} for $\Omega$. When $\Omega$ has finite measure, this definition coincides with the earlier one \cite[Corollary 1.11]{Ped87}, and the dual measure $\mu$ is the counting measure on the spectrum $\Lambda$.
	\end{definition}
	
	With this broader definition, Pedersen generalized Fuglede’s theorem by eliminating both the Nikodym and finite-measure conditions, while maintaining the assumption that $\Omega$ is connected.
	
	\begin{theorem}\label{thp5}\cite[Theorem 2.2]{Ped87}
		Let $\Omega$ be an open, connected subset of $\br^d$. Then the partial differential operators $\{D_j\}$ admit commuting self-adjoint extensions if and only if $\Omega$ is a spectral set.
	\end{theorem}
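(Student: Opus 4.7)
For the \emph{if} direction, I will start from the spectral pair $(\Omega,\mu)$ with its isometric isomorphism $\mathscr F : L^2(\Omega) \to L^2(\mu)$. On $L^2(\mu)$ the coordinate multiplication operators $M_j : g \mapsto t_j g$ (with their natural maximal domains) are commuting self-adjoint operators, so I will define $H_j := \mathscr F^{-1} M_j \mathscr F$ and verify that these are commuting self-adjoint extensions of $D_j$. The extension property will follow because, for $f \in C_0^\infty(\Omega)$, integration by parts gives $\widehat{D_j f}(t) = t_j \hat f(t)$, which lies in $L^2(\mu)$ since $\hat f$ is rapidly decreasing; hence $\mathscr F(D_j f) = M_j \mathscr F(f)$, so $H_j f = D_j f$.

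The \emph{only if} direction is the substantive half. Assuming commuting self-adjoint extensions $H_1,\dots,H_d$ exist, the spectral theorem for commuting self-adjoint operators produces a joint projection-valued measure $E$ on $\br^d$ with $H_j = \int t_j \, dE(t)$ and a strongly continuous unitary group $U(s) := \exp(2\pi i\, s \cdot H) = \int e^{2\pi i s \cdot t}\,dE(t)$. The crucial observation, which exploits the fact that $H_j$ extends $D_j$ on $C_0^\infty(\Omega)$, is that $U(s)$ acts as spatial translation on test functions whose translate still lies inside $\Omega$: for $f \in C_0^\infty(\Omega)$ with $\supp f = K$ and for any $s$ with $K - s \subset \Omega$, one has $U(s) f = f(\cdot + s)$ in $L^2(\Omega)$. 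Combining this with the classical Plancherel identity on $\br^d$ applied to the zero-extension of $f$ yields
\[
\int_{\br^d} e^{2\pi i s\cdot t} \, d\mu_f(t) \;=\; \langle U(s) f, f\rangle \;=\; \int_{\br^d} |\hat f(t)|^2 e^{2\pi i s \cdot t} \, dt
\]
for all $s$ in an open neighborhood of $0$, where $\mu_f(A) := \|E(A) f\|^2$ denotes the scalar spectral measure of $f$.

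From this local identity I will upgrade to the global equality $d\mu_f(t) = |\hat f(t)|^2\, dt$ and then patch the measures $\mu_f$ over a dense family of $f \in C_0^\infty(\Omega)$ to produce a single scalar measure $\mu$ with $\|f\|_{L^2(\Omega)}^2 = \int |\hat f|^2\, d\mu$. Verifying the remaining items of Definition \ref{defp1}---continuity of $t\mapsto \hat f(t)$ for $f \in L^1(\Omega)\cap L^2(\Omega)$, the isometry property, and density of the range---will then follow by standard approximation of $L^2(\Omega)$ by $C_0^\infty(\Omega)$.

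The main obstacle will be the passage from the local identity of characteristic functions to the global identity of measures, together with the gluing step producing a $\mu$ independent of $f$. This is precisely where the connectedness of $\Omega$ enters: it allows one to chain together small translations along paths in $\Omega$, propagating the relation between spectral data and Fourier data, and it forces the multiplicity function in the direct-integral decomposition associated with $\{H_j\}$ to equal one almost everywhere. In Fuglede's original finite-measure setting, the constant function $\mathbf 1 \in L^2(\Omega)$ serves as a cyclic vector and makes the multiplicity-one statement nearly automatic; in the unbounded setting considered here, $\mathbf 1 \notin L^2(\Omega)$, and the substitute cyclic-vector / patching argument is the technical heart of the proof I am sketching.
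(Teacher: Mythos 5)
You should first note that the paper does not prove this statement at all: Theorem \ref{thp5} is quoted from Pedersen \cite{Ped87}, so there is no internal proof to compare against, and your proposal has to be judged on its own correctness. Your \emph{if} direction is the standard argument and is fine: $H_j:=\mathscr F^{-1}M_j\mathscr F$ works, and $\mathscr F(D_jf)=M_j\mathscr F f$ for $f\in C_0^\infty(\Omega)$ because $D_jf\in L^1(\Omega)\cap L^2(\Omega)$ and $\widehat{D_jf}(t)=t_j\hat f(t)$.

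The \emph{only if} direction, however, contains a genuine gap at exactly the step you call the upgrade. From $\langle U(s)f,f\rangle=\int e^{2\pi i s\cdot t}\,d\mu_f(t)$ (valid for all $s$) and $U(s)f=$ translate of $f$ (valid only for small $s$, and, note, only when the whole segment of translates stays in $\Omega$, not merely the endpoint as you wrote), you obtain agreement of the Fourier transforms of the two positive measures $\mu_f$ and $|\hat f|^2\,dt$ on a neighborhood of $0$. That does not determine the measure, and indeed the global identity $d\mu_f=|\hat f(t)|^2\,dt$ you claim is false: take $\Omega=(0,1)$ and the self-adjoint extension with periodic boundary conditions, so that $E$ is atomic on $\bz$ and $\mu_f=\sum_{k\in\bz}|\hat f(k)|^2\delta_k$, which is singular, while $|\hat f|^2\,dt$ is absolutely continuous. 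The correct target is $d\|E(\cdot)f\|^2=|\hat f|^2\,d\mu$ for a single measure $\mu$ that is in general \emph{not} Lebesgue measure (in the finite-measure case it is the counting measure on a spectrum), and producing this common $\mu$ from the local translation identity, using connectedness to chain translations and control multiplicity, is the actual content of Pedersen's proof --- your sketch, as written, would force $\mu$ to be Lebesgue measure. Relatedly, you cannot dismiss density of the range as ``standard approximation'': the isometry $\|f\|_{L^2(\Omega)}^2=\int|\hat f|^2\,dt$ into $L^2(dt)$ holds for \emph{every} $\Omega$ by the classical Plancherel theorem, yet most $\Omega$ are not spectral with Lebesgue pair measure; it is precisely the density of $\{\hat f|_{\operatorname{supp}\mu}: f\in L^1(\Omega)\cap L^2(\Omega)\}$ in $L^2(\mu)$ that carries the theorem, so it must be proved, not assumed.
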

	
 	The results were then extended to disconnected domains in \cite{CD25}. In this case, if $\Omega$ is spectral, then there are commuting self-adjoint extensions of the differential operators $\{D_j\}$. However, the converse has to be adjusted a little: the Fourier transform in Definition \ref{defp1} is allowed to have some weights which differ on the components of $\Omega$, and the resulting condition is necessary and sufficient.

In the 1974 paper Fuglede proved that his conjecture is true when the spectrum or the tiling set is a lattice in $\br^d$.

\begin{theorem}\label{thf1}
	
	Let $\Omega$ be a measurable subset of $\br^d$ of finite measure and let $A$ be some invertible $d\times d$ matrix. Then $\Omega$ tiles $\br^d$ by the lattice $A\bz^d$ if and only if $\Omega$ has spectrum the dual lattice $(A^T)^{-1}\bz^d$.
\end{theorem}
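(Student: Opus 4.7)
The plan is to show that both sides of the biconditional are encoded by the same Fourier-analytic condition $(\ast)$: $|\Omega| = |\det A|$, together with $\hat\chi_\Omega(\gamma) = 0$ for every $\gamma \in \Lambda^*\setminus\{0\}$, where $\Lambda := A\bz^d$ and $\Lambda^* := (A^T)^{-1}\bz^d$. For the tiling side, I would introduce the $\Lambda$-periodization $P(x) := \sum_{\lambda\in\Lambda}\chi_\Omega(x-\lambda)$, which is $\Lambda$-periodic, nonnegative integer-valued, and integrable on any fundamental domain $D$ of $\Lambda$ with $\int_D P = |\Omega|$. Tiling by $\Lambda$ is exactly the statement $P\equiv 1$ a.e. A short unfolding computation (substituting $y=x-\lambda$ and using $\gamma\cdot\lambda\in\bz$ for $\gamma\in\Lambda^*$) identifies the Fourier coefficient of $P$ at $\gamma \in \Lambda^*$ with $\hat\chi_\Omega(\gamma)/|\det A|$; uniqueness of Fourier coefficients of $L^1$ functions on the torus then yields the equivalence tiling $\Leftrightarrow (\ast)$. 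For the spectrum side, a direct computation of $\int_\Omega e^{2\pi i(\lambda^*-\mu^*)\cdot x}\,dx$ shows that orthogonality of $\{e_{\lambda^*}: \lambda^*\in\Lambda^*\}$ in $L^2(\Omega)$ is equivalent to the vanishing half of $(\ast)$.

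For the direction ``tiling $\Rightarrow$ spectrum'', $(\ast)$ already delivers orthogonality, so only Parseval (equivalently, completeness) remains. Given $f\in L^2(\Omega)$, form $F(x) := \sum_{\lambda\in\Lambda}\tilde f(x-\lambda)$, where $\tilde f$ is the zero-extension of $f$ to $\br^d$. Because $\{\Omega+\lambda\}_{\lambda\in\Lambda}$ partitions $\br^d$ a.e., the folding map $x\mapsto x-\lambda(x)$, where $\lambda(x)$ is the unique $\lambda$ with $x\in\Omega+\lambda$, is a measure-preserving bijection $D\to\Omega$, giving $\|F\|_{L^2(D)} = \|f\|_{L^2(\Omega)}$. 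The same unfolding computation as above identifies the $\gamma$-th Fourier coefficient of $F$ on $D$ with $\hat{\tilde f}(\gamma)/|\det A|$, and classical Parseval on $L^2(D)$ yields $\sum_{\lambda^*\in\Lambda^*}|\hat{\tilde f}(\lambda^*)|^2 = |\Omega|\,\|f\|^2$. Since $\hat{\tilde f}(\lambda^*) = \langle f, e_{\lambda^*}\rangle$, this is exactly the Parseval identity for the orthogonal system $\{e_{\lambda^*}\}$ of norm-squared $|\Omega|$, so the system is complete and $\Lambda^*$ is a spectrum.

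For the converse ``spectrum $\Rightarrow$ tiling'', orthogonality forces the vanishing half of $(\ast)$. By the torus-Fourier argument from the first paragraph, $P$ is a.e.\ equal to a constant $n := |\Omega|/|\det A|$; since $P$ takes nonnegative integer values, $n$ is a nonnegative integer and $\Omega$ \emph{$n$-tiles} $\br^d$ by $\Lambda$. The cases $n=0$ (null $\Omega$) and $n=1$ (the desired tiling) pose no issue, so I would exclude $n\geq 2$ by violating completeness. If $n\geq 2$, the identity $\sum_{\lambda\in\Lambda}|\Omega\cap(\Omega-\lambda)| = \int_\Omega P\,dx = n|\Omega|$ forces some $\lambda_0\in\Lambda\setminus\{0\}$ to satisfy $|\Omega\cap(\Omega-\lambda_0)|>0$. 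With $E := \Omega\cap(\Omega-\lambda_0)$, both $E$ and $E+\lambda_0$ lie inside $\Omega$, and $f := \chi_E - \chi_{E+\lambda_0}$ is a nonzero element of $L^2(\Omega)$ because a positive-finite-measure set cannot coincide with a nontrivial translate of itself. Since $\lambda^*\cdot\lambda_0\in\bz$ for every $\lambda^*\in\Lambda^*$, a one-line change of variable gives $\langle f, e_{\lambda^*}\rangle = \hat\chi_E(\lambda^*)(1 - e^{-2\pi i\lambda^*\cdot\lambda_0}) = 0$ for all $\lambda^*$, contradicting completeness. Hence $n = 1$ and $\Omega$ tiles $\br^d$ by $\Lambda$. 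The least routine step will be the extraction of $\lambda_0$ and the observation that $n$-tiling with $n\geq 2$ is incompatible with completeness; the remainder is a packaging of torus-Fourier analysis and a single folding bijection.
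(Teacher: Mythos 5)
The paper does not actually prove Theorem \ref{thf1}: it is quoted as Fuglede's 1974 lattice result \cite{Fug74}, so there is no internal argument to compare against. Your proposal is a correct, self-contained proof, and it is essentially the standard modern treatment: encode both properties through the condition $|\Omega|=|\det A|$ and $\hat\chi_\Omega(\gamma)=0$ for $\gamma\in(A^T)^{-1}\bz^d\setminus\{0\}$, identify the Fourier coefficients of the periodization $P$ with values of $\hat\chi_\Omega$ on the dual lattice, get completeness in the direction ``tiling $\Rightarrow$ spectrum'' from the folding bijection of a fundamental domain onto $\Omega$ together with Parseval on the torus $\br^d/A\bz^d$ (this is legitimate since $L^2(\Omega)\subset L^1(\Omega)$ by finiteness of $|\Omega|$, so the unfolding is justified by Fubini), and in the converse direction use orthogonality to force $P\equiv n:=|\Omega|/|\det A|$ and completeness to kill $n\ge 2$ via the test function $\chi_E-\chi_{E+\lambda_0}$, whose nonvanishing follows correctly from the fact that a set of positive finite measure cannot equal a nontrivial translate of itself modulo null sets. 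This differs in flavor from the machinery the paper itself uses elsewhere (its Lemma \ref{lemsp}, the criterion $\sum_{\lambda}|\hat\mu(t+\lambda)|^2=m^2$, which gives an alternative one-line bridge between tiling and spectrum for lattices); your route trades that pointwise identity for the folding/unfolding argument plus the multiple-tiling exclusion, which is arguably more elementary and makes explicit where completeness, as opposed to mere orthogonality, is used in each direction. The only spot I would tighten is the dismissal of $n=0$: a null set certainly does not tile $\br^d$, so you should say explicitly why it is excluded (e.g.\ that an orthogonal basis consists of nonzero vectors, so a spectral $\Omega$ has $|\Omega|>0$, or simply that the theorem tacitly assumes $|\Omega|>0$); with that one sentence added, the proof is complete.
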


Many examples of spectral sets are known in the case when $\Omega$ has finite measure, and the spectrum can be a non-lattice, see e.g, \cite{Kol24}. However, much less is known for sets of infinite measure. Of course, the simplest, and most important example of a spectral set of infinite measure is $\br^d$, with pair measure the Lebesgue measure on $\br^d$. To obtain some other examples, a  result similar to Theorem \ref{thf1} was proved for sublattices of $\br^d$ in \cite{CD25}:

\begin{theorem}\label{th5.9}
	Write $\br^d=\br^{d_1}\times\br^{d_2}$. Let $A$ be an invertible real $d\times d$ matrix. The set $\Omega$ tiles with the discrete subgroup $\mathcal T=A(\mathbb{Z}^{d_{1}}\times\{0\})$ if and only if $\Omega$ has pair measure $A^T(\Sha_{\mathbb{Z}^{d_{1}}}\times \mathfrak m_{d_{2}})$ which is supported on the dual set $\mathcal T^*$,
		$$\mathcal T^*:=\{\gamma^*\in\br^d: \gamma^*\cdot \gamma\in\bz \text{ for all }\gamma\in\mathcal T\}.$$
\end{theorem}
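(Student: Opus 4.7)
The plan is to reduce to the case $A=I$ by a unitary change of variable and then to factor the Fourier isometry $f\mapsto \hat f|_{\mathcal T^*}$ as the composition of a periodization map with a standard Fourier unitary. The change of variable $\Phi_A f'(x):=|\det A|^{-1/2}f'(A^{-1}x)$ is a unitary $L^2(A^{-1}\Omega)\to L^2(\Omega)$ satisfying $\widehat{\Phi_A f'}(\xi)=|\det A|^{1/2}\hat{f'}(A^T\xi)$. Combined with the fact that the measure $A^T\nu$ is supported on $(A^T)^{-1}\supp\nu=\mathcal T^*$, this shows that $(\Omega,A^T(\Sha_{\bz^{d_1}}\times\mathfrak m_{d_2}))$ is a spectral pair iff $(A^{-1}\Omega,\Sha_{\bz^{d_1}}\times\mathfrak m_{d_2})$ is, and similarly $\Omega$ tiles by $A(\bz^{d_1}\times\{0\})$ iff $A^{-1}\Omega$ tiles by $\bz^{d_1}\times\{0\}$. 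So I may assume $A=I$, $\mathcal T=\bz^{d_1}\times\{0\}$, and $\mu=\Sha_{\bz^{d_1}}\times\mathfrak m_{d_2}$.

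In this reduced case, introduce the periodization map
$$\pi:L^2(\Omega)\to L^2\bigl([0,1)^{d_1}\times\br^{d_2}\bigr),\qquad (\pi f)(y_1,x_2):=\sum_{k\in\bz^{d_1}}f(y_1+k,x_2),$$
and the standard Fourier unitary $\mathscr U:L^2([0,1)^{d_1}\times\br^{d_2})\to L^2(\mu)$ obtained by Fourier series in $y_1$ followed by Fourier transform in $x_2$. For $f\in L^1(\Omega)\cap L^2(\Omega)$, splitting $\int_\Omega f(x_1,x_2)e^{-2\pi i(n\cdot x_1+\xi_2\cdot x_2)}\,dx$ along the slabs $([0,1)^{d_1}+k)\times\br^{d_2}$ and using $e^{-2\pi i n\cdot k}=1$ yields the key identity $\hat f(n,\xi_2)=(\mathscr U\pi f)(n,\xi_2)$ for $n\in\bz^{d_1}$, $\xi_2\in\br^{d_2}$. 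Since $\mathscr U$ is unitary, $(\Omega,\mu)$ is a spectral pair iff $\pi$ extends to a surjective isometry onto $L^2([0,1)^{d_1}\times\br^{d_2})$.

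It remains to match the isometry and surjectivity of $\pi$ with the packing and covering of the $\bz^{d_1}\times\{0\}$-translates of $\Omega$. Since $\br^d=\bigsqcup_{k\in\bz^{d_1}}([0,1)^{d_1}+k)\times\br^{d_2}$, one has $\|f\|^2_{L^2(\Omega)}=\int\!\!\int\sum_k|f(y_1+k,x_2)|^2\,dy_1\,dx_2$ while $\|\pi f\|^2=\int\!\!\int|\sum_k f(y_1+k,x_2)|^2\,dy_1\,dx_2$, so $\|\pi f\|^2\ge\|f\|^2$ whenever $f\ge 0$, with equality iff for a.e.\ $(y_1,x_2)$ only one $k$ satisfies $(y_1+k,x_2)\in\supp f$; plugging in indicators of bounded $B\subseteq\Omega$ and letting $B\uparrow\Omega$ one concludes that $\pi$ is isometric iff the translates $\Omega+(k,0)$ are pairwise essentially disjoint (i.e., pack $\br^d$). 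Given packing, the range of $\pi$ equals $L^2(R)$ with $R:=\{(y_1\bmod 1,x_2):(y_1,x_2)\in\Omega\}\subseteq[0,1)^{d_1}\times\br^{d_2}$, so surjectivity of $\pi$ forces $R=[0,1)^{d_1}\times\br^{d_2}$ a.e., which says the translates also cover $\br^d$. Combining, $(\Omega,\mu)$ is a spectral pair iff $\Omega$ tiles $\br^d$ by $\bz^{d_1}\times\{0\}$.

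The main anticipated obstacle is bookkeeping rather than conceptual: tracking the scaling factors and push-forward/pull-back conventions in the change of variables so that the reduction genuinely takes $(\Omega,A^T(\Sha_{\bz^{d_1}}\times\mathfrak m_{d_2}))$ to $(A^{-1}\Omega,\Sha_{\bz^{d_1}}\times\mathfrak m_{d_2})$ without stray Jacobian factors, and handling the fact that $\Omega$ may have infinite measure, so that the indicator-function arguments must be applied to bounded subsets $B\subseteq\Omega$ and patched together by exhaustion before packing and covering can be concluded on all of $\br^d$.
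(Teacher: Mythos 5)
You should note at the outset that this paper does not actually prove Theorem \ref{th5.9}: it is quoted as a known result from \cite{CD25}, so there is no in-paper proof to compare against. Judged on its own terms, your argument is sound and essentially complete in outline. The reduction to $A=I$ via the unitary $\Phi_A$ is correct with the paper's convention \eqref{eqdil1} (the Jacobian factors cancel exactly as you claim, and the support of $A^T(\Sha_{\bz^{d_1}}\times\mathfrak m_{d_2})$ is indeed $\mathcal T^*$), and in the reduced case the identification of the restricted Fourier transform with $\mathscr U\circ\pi$, where $\mathscr U$ is the unitary $L^2([0,1)^{d_1}\times\br^{d_2})\to L^2(\Sha_{\bz^{d_1}}\times\mathfrak m_{d_2})$, converts the two spectral-pair conditions into packing (isometry of $\pi$, via nonnegative indicators of bounded subsets and exhaustion) and covering (density of the range, via $R=[0,1)^{d_1}\times\br^{d_2}$ a.e.). This is very much in the spirit of the periodization techniques the present paper uses for its own Theorem \ref{thm} (compare Lemma \ref{lem3} and Proposition \ref{pr4}), but it is cleaner here because the dual measure has a full fundamental domain as support, so no exhaustion by finite-measure spectral sets $\Omega_n$ or Riemann-sum limits are needed.

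Two technical points should be tightened before this counts as a complete proof. First, $\pi$ is not defined on all of $L^2(\Omega)$; you should work with $f\in L^1(\Omega)\cap L^2(\Omega)$ (where $\pi f$ converges absolutely a.e.\ and lies in $L^1$ of the slab) and only extend by continuity at the end, which is all the definition of a spectral pair requires. Second, in the direction ``spectral pair $\Rightarrow$ packing'' you only know a priori that $\int|\hat f|^2\,d\mu<\infty$ and equals $\|f\|^2$; to conclude $\pi f\in L^2$ with $\|\pi f\|_{L^2}=\|\hat f\|_{L^2(\mu)}$ you need the standard uniqueness fact that an $L^1$ function on $[0,1)^{d_1}\times\br^{d_2}$ whose mixed Fourier data $(n,\xi_2)\mapsto\widehat{\pi f}(n,\xi_2)$ is square integrable against $\Sha_{\bz^{d_1}}\times\mathfrak m_{d_2}$ must itself be the $L^2$ function with that data (prove it by testing against products of trigonometric polynomials and Schwartz functions). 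Also, the equality criterion in the packing step should read ``at most one $k$,'' not ``only one $k$.'' With these repairs the argument goes through.
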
		
		Here the Dirac comb $\Sha_{\bz^{d_1}}$ is the counting measure on $\bz^{d_1}$, and $\mathfrak m_{d_2}$ is the Lebesgue measure on $\br^{d_2}$.
		
			For a Borel measure $\mu$ on $\br^d$, the measure $A^T\mu$ is defined by $A^T\mu(E)=|\det(A^T)^{-1}|\mu(A^TE)$ for any Borel subset $E$ in $\br^d$. Equivalently, for any compactly supported continuous function $f$ on $\br^d$:
		\begin{equation}
			\label{eqdil1}
			\int f\, dA^T\mu=|\det(A^T)^{-1}|\int f((A^T)^{-1}x)\,d\mu(x).
		\end{equation}
		
		Note that if the measure $\mu$ is supported on a set $\Lambda$, then $A^T\mu$ is supported on $(A^T)^{-1}\Lambda$.

	In this paper we provide a new class of examples of unbounded spectral sets in $\br$ and we prove the following theorem:
	
	\begin{theorem}\label{thm}
		Let $\Omega$ be an open subset in $\br$ and let $p\in\bn$, $p\geq 2$. Then $\Omega$ tiles $\br$ by $\{0,1,\dots,p-1\}$ if and only if $\Omega$ is a spectral set with pair measure $\mu$ equal to a renormalized Lebesgue measure on $\left[-\frac1{2p},\frac{1}{2p}\right]+\bz$, $\mu=p\Leb_{\left[-\frac1{2p},\frac1{2p}\right]+\bz}$ .
		\end{theorem}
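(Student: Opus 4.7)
The plan is to work fiberwise over $[0,1)$, writing each $x \in \br$ as $x = y + m$ with $y \in [0,1)$ and $m \in \bz$. The key structural observation is that $\Omega$ tiles $\br$ by $\{0, 1, \dots, p-1\}$ if and only if $\Omega = E + p\bz$ for some measurable $E \subset [0,p)$ whose reduction modulo $1$ is a bijection onto $[0,1)$ up to null sets; equivalently, the slice $S_y := \{m \in \bz : y + m \in \Omega\}$ is a single coset $r(y) + p\bz$ with $r(y) \in \{0, \dots, p-1\}$ for a.e.\ $y$. This characterization drives both directions.

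For the direction tiling $\Rightarrow$ spectral, I would establish the Plancherel identity $\int |\hat f|^2\, d\mu = \|f\|^2$ for $f \in L^1(\Omega) \cap L^2(\Omega)$ by decomposing
\[
\int |\hat f|^2\, d\mu = p \int_{-1/(2p)}^{1/(2p)} \sum_{n \in \bz} |\hat f(n+\eta)|^2\, d\eta,
\]
applying fiberwise Parseval to rewrite $\sum_n |\hat f(n+\eta)|^2 = \int_0^1 \bigl|\sum_k f(x+k) e^{-2\pi i\eta k}\bigr|^2 dx$, then expanding the square and integrating in $\eta$ using the identity $p\int_{-1/(2p)}^{1/(2p)} e^{-2\pi i\eta m}\, d\eta = \operatorname{sinc}(m/p)$, where $\operatorname{sinc}(t) := \sin(\pi t)/(\pi t)$. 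The tiling structure kills cross terms: $f(x+k)\overline{f(x+l)}$ is nonzero only when $k \equiv l \pmod p$, while $\operatorname{sinc}(j) = 0$ on $\bz \setminus \{0\}$. For surjectivity, I would take $h \in L^2(\mu)$ orthogonal to the image of $\mathscr F$ and form $\Psi(x) := \int e^{2\pi i\xi x} h(\xi)\, d\mu(\xi)$, which vanishes on $\Omega$. Expanding $h(n+\eta) = \sum_j c_{n,j} e^{-2\pi ijp\eta}$ in the orthonormal basis $\{e^{-2\pi ijp\eta}\}_j$ of $L^2([-\tfrac{1}{2p}, \tfrac{1}{2p}], p\,d\eta)$, one computes $\Psi(x) = \sum_n e^{2\pi inx} F_n(x)$ with $F_n(x) := \sum_j c_{n,j} \operatorname{sinc}(x/p - j) \in PW[-\tfrac1{2p}, \tfrac1{2p}]$. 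Evaluating $\Psi = 0$ on the fiber points $y + r(y) + jp \in \Omega$ and Fourier-expanding in $y$, I would invoke a sampling-theorem argument (a bandlimited function vanishing on a shifted $p$-spaced grid is forced to be zero via Fourier duality on $\ell^2(\bz)$) to conclude $F_n \equiv 0$, hence $c_{n,j} = 0$ and $h = 0$.

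For the direction spectral $\Rightarrow$ tiling, polarizing Plancherel with $f = \alpha \mathbf{1}_A + \beta \mathbf{1}_B$ yields
\[
\sum_{m \in \bz} \operatorname{sinc}(m/p)\, |A \cap (B+m)| = |A \cap B| \qquad (A, B \subset \Omega \text{ bounded measurable}).
\]
Applying this to $A = (I+k)\cap \Omega$, $B = (I+l)\cap\Omega$ for a small interval $I \subset [0,1)$ isolates the single term $m = k-l$, and since $\operatorname{sinc}((k-l)/p) \neq 0$ precisely when $k - l \in \bz \setminus p\bz$, one obtains $|\Omega \cap (\Omega + m)| = 0$ for every $m \in \bz \setminus p\bz$. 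Thus each slice $S_y$ lies in a single residue class modulo $p$, which is the packing half of the tiling statement.

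Upgrading packing to covering is the crux and, I expect, the main obstacle. My approach is by contradiction using surjectivity: suppose $\Omega_0 := \bigsqcup_{k=0}^{p-1}(\Omega + k) \neq \br$ and pick a bounded $K \subset \Omega_0^c$ with $|K|>0$. Since $\hat{\mathbf{1}_K}|_{\supp\mu} \in L^2(\mu)$, surjectivity produces $\tilde f \in L^2(\Omega)$ with $\hat{\tilde f} = \hat{\mathbf{1}_K}$ on $\supp\mu$; then $h := \tilde f - \mathbf{1}_K \in L^2(\br)$ has $\hat h$ vanishing on $\supp\mu \supset \bz$, so both its $1$- and $p$-periodizations have prescribed vanishing of Fourier coefficients. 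Combining these periodization identities with the packing (so that the fiberwise sum $\sum_m \tilde f(x+m)$ sees only the residue class $r(x) + p\bz$, which is disjoint from the support of $\mathbf{1}_K$) should force $\mathbf{1}_K$ to integrate to zero on a.e.\ fiber it meets, contradicting $|K|>0$. Making this last step fully rigorous—translating the abstract surjectivity of $\mathscr F$ into a concrete geometric constraint via the interplay of the two periodizations—is the technical heart of the proof.
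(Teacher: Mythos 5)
Your tiling-implies-spectral direction takes a genuinely different route from the paper's: the paper exhausts $\Omega$ by the finite-measure sets $\Omega_n=\Omega_0+p\{-n,\dots,n\}$ (Proposition \ref{pr2}), obtains the Plancherel identity as a limit of Riemann sums (Proposition \ref{iso}), and proves surjectivity constructively via $f=\chi_\Omega\varphi$, using that $\widehat\chi_\Omega$ is a Dirac comb on $\frac1p\bz$ whose coefficients vanish at nonzero integers (Proposition \ref{pr3}); your fiberwise Parseval with the $\operatorname{sinc}(m/p)$ kernel gives the isometry more directly, and your localized polarization argument for packing is a correct variant of Propositions \ref{pr4}--\ref{pr5}. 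Two points in your surjectivity sketch need repair, though both are fixable: since $\mu$ is infinite, $h\in L^2(\mu)$ need not be in $L^1(\mu)$, so $\Psi$ must be defined as the $L^2(\br)$ limit of truncations (the inverse Plancherel transform of $p\,h$ extended by zero), not as a pointwise integral; and the decoupling in $n$ is not a sampling theorem applied to each $F_n$ separately --- what makes it work is that on the fiber points $y+k(y)+jp$ the phases $e^{2\pi in(k(y)+jp)}$ equal $1$, so taking coefficients against the orthogonal basis $\{e^{-2\pi ipj\eta}\}_{j\in\bz}$ of $L^2\left[-\frac1{2p},\frac1{2p}\right]$ and then Fourier coefficients in $y$ yields $h(n+\eta)=0$ for all $n$ and a.e.\ $\eta$.

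The genuine gap, which you flag yourself, is the covering step, and as literally described it would fail: the $p$-periodization of $h=\tilde f-\mathbf{1}_K$ gives nothing new, because $\frac1p\bz\cap\left(\left[-\frac1{2p},\frac1{2p}\right]+\bz\right)=\bz$, so the only coefficients $\widehat h(k/p)$ known to vanish are those with $k\in p\bz$, i.e.\ exactly the information already in the $1$-periodization; and $\operatorname{Per}(h)=0$ is a single scalar identity per fiber, perfectly consistent with $|K|>0$. To close the argument you must use the full strength of $\widehat h=0$ a.e.\ on $\left[-\frac1{2p},\frac1{2p}\right]+\bz$, fiberwise: for a.e.\ $\eta\in\left[-\frac1{2p},\frac1{2p}\right]$ and a.e.\ $x\in[0,1)$ one has $\sum_k h(x+k)e^{-2\pi i\eta k}=0$ (a continuum of twisted periodizations, i.e.\ the Zak transform). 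By packing, the $\tilde f$-part of this sum is $e^{-2\pi i\eta r(x)}$ times a series in $\{e^{-2\pi ipj\eta}\}_j$ supported on those $j$ with $x+r(x)+jp\in\Omega$, while the $\mathbf{1}_K$-part contributes the same exponentials with coefficient $1$ at uncovered $j$'s; comparing coefficients in this orthogonal basis forces $K_x=\emptyset$ for a.e.\ $x$, hence $|K|=0$, a contradiction. So your strategy is completable, but only after upgrading ``the two periodizations'' to this fiberwise statement. By contrast, the paper's endgame is softer: it completes $\Omega$ to a set $\tilde\Omega\supseteq\Omega$ tiling by $\{0,\dots,p-1\}$ (Lemmas \ref{lem6}--\ref{lem7}) and then plays the two surjective isometries onto the same $L^2(\mu)$ --- from $L^2(\Omega)$ by hypothesis and from $L^2(\tilde\Omega)$ by the already-proved direction --- against each other to force $|\tilde\Omega\setminus\Omega|=0$ (Lemma \ref{lem8}), avoiding any further Fourier analysis.
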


		\section{Proof of Theorem \ref{thm}}
	
	We will need some definitions and notations.

	\begin{definition}\label{tile}
	Let $\Omega_0$ and $\Omega$ be two measurable subsets in $\br$, and let $\mathcal T$ be a finite or countable subset of $\br$. We say that $\Omega_0$ {\it tiles}  $\Omega$ by $\mathcal T$ if $\{\Omega_0+t : t\in\mathcal T\}$ is a partition of $\Omega$, up to measure zero. 
	
	We denote by $T_a$ the translation operator on functions on $\br$ 
	$$(T_a)f(x)=f(x-a),\quad (x\in\br,a\in\br).$$
	\end{definition}

	\begin{definition}
		\label{congset}
		Let $E$ and $F$ be two measurable subsets of $\br$. We say that $E$ and $F$ are {\it congruent} $\mod \bz$ if there exists a partition (up to measure zero) of $E$ into possibly empty sets $\{E_k : k\in\bz\}$ such that $\{E_k+k : k\in\bz\}$ is a partition of $F$ (up to measure zero). 
	\end{definition}
	
	\begin{definition}
		\label{spme}
		Let $\mu$ be a finite Radon measure on $\br$ and $\Lambda\subset\br$. We say that $\mu$ is a {\it spectral measure} with {\it spectrum} $\Lambda$ if the family $\{e_\lambda : \lambda\in\Lambda\}$ forms an orthogonal basis for $L^2(\mu)$.
	\end{definition}
	\begin{definition}\label{sha}
		
		For a measurable subset $E$ of $\br$, we denote by $|E|$ its Lebesgue measure and by $\chi_E$ its characteristic function. 
		For a finite subset $F$, we denote by $|F|$ its cardinality.		
		We denote by $\delta_\gamma$ the Dirac measure at the point $\gamma$ in $\br$. 		
		For a discrete subset $\Gamma$ of $\br$, we define the {\it Dirac comb} on $\Gamma$ to be the counting measure on $\Gamma$,
		$$\Sha_\Gamma=\sum_{\gamma\in\Gamma}\delta_\gamma.$$
		
		For a measurable subset $E$ of $\br$, we denote by $\mbox{Leb}_E$ the Lebesgue measure restricted to $E$. 
		
		For a function $f$ on $\br$ we denote its periodization by
		$$\Per(f)(t)=\sum_{k\in\bz}f(t+k),\quad (t\in\br).$$
	\end{definition}

	We begin now the proof of our main result.
	
	\begin{proof}[Proof of Theorem \ref{thm}]
	We assume that $\Omega$ tiles $\br$ by $\{0,1,\dots,p-1\}$.
	
	\begin{proposition}\label{pr1}
	Suppose $\Omega$ tiles $\br$ by $\{0,1,\dots,p-1\}$. 
	Define $\Omega_0=\Omega\cap[0,p)$. 
	\begin{enumerate}
		\item $\Omega+kp\subset\Omega$ for all $k\in\bz$. 
		\item $\Omega_0$ tiles $\Omega$ by $p\bz$; in particular $$\Omega=\Omega_0+p\bz.$$
		\item $\Omega_0$ tiles $\br$ by $\bz$. Therefore $\Omega_0$ has measure 1, has spectrum $\bz$, and is congruent to $[0,1)$ modulo $\bz$. 
	\end{enumerate}
	\end{proposition}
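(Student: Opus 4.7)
The plan for Proposition~\ref{pr1} is to establish part~(1), the $p$-periodicity of $\Omega$, first; once that is in hand, parts (2) and (3) reduce to straightforward decomposition arguments, with (3) invoking Fuglede's lattice theorem (Theorem~\ref{thf1}).

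For (1), the strategy is to compare the pointwise form of the tiling condition $\sum_{j=0}^{p-1}\chi_\Omega(x-j)=1$ a.e.\ with its translate by $+1$: the shifted identity runs over the indices $\{-1,0,1,\dots,p-2\}$ in place of $\{0,1,\dots,p-1\}$. Subtracting the two identities cancels the $p-1$ common terms and leaves exactly $\chi_\Omega(x+1)=\chi_\Omega(x-(p-1))$. A change of variable $y=x+1$ then gives $\chi_\Omega(y)=\chi_\Omega(y-p)$ a.e., i.e., $\Omega+p=\Omega$ up to measure zero; iterating yields $\Omega+kp=\Omega$ for every $k\in\bz$, which is actually stronger than the inclusion asserted in (1).

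Given (1), part (2) follows by slicing $\br$ along the intervals $[kp,(k+1)p)$: the $p$-periodicity gives $\Omega\cap[kp,(k+1)p)=\Omega_0+kp$, so $\Omega=\bigsqcup_{k\in\bz}(\Omega_0+kp)$ as a disjoint union. Substituting this into $\br=\bigsqcup_{j=0}^{p-1}(\Omega+j)$ produces $\br=\bigsqcup_{k,j}\bigl(\Omega_0+(kp+j)\bigr)$, and since $(k,j)\mapsto kp+j$ is a bijection onto $\bz$, we conclude that $\Omega_0$ tiles $\br$ by $\bz$. The covolume of $\bz$ then forces $|\Omega_0|=1$; Theorem~\ref{thf1} applied to the self-dual lattice $\bz$ yields spectrum $\bz$; and the partition $E_k=\Omega_0\cap[-k,-k+1)$---obtained by intersecting the $\bz$-tiling with $[0,1)$---realizes the congruence of $\Omega_0$ with $[0,1)$ modulo $\bz$. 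The main obstacle is the algebraic cancellation in part~(1); once $p$-periodicity is in hand, the rest is bookkeeping.
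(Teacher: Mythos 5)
Your proposal is correct and takes essentially the same approach as the paper: prove the $p$-periodicity first, decompose $\Omega$ as the disjoint union of $\Omega_0+p\bz$, deduce the $\bz$-tiling of $\br$, and obtain the measure, the spectrum via Theorem~\ref{thf1}, and the congruence with $[0,1)$ by intersecting with unit intervals. The only cosmetic difference is in part~(1), where you cancel the shifted identity $\sum_{j=0}^{p-1}\chi_\Omega(x-j)=1$ a.e.\ (giving $\Omega+p=\Omega$ up to measure zero), while the paper argues pointwise that $x+p$ must fall in $\Omega+0$ by disjointness of the translates; the underlying idea is the same.
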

	
	\begin{proof}
		Let $x\in\Omega$. Then $x+p\in\Omega$. Indeed, since $\Omega$ tiles with $\{0,\dots,p-1\}$ we can write $x+p=y+j$ for $y\in\Omega$ and $j\in\{0,\dots,p-1\}$. If $j\neq 0$, then $x+(p-j)=y$ so $y\in \Omega\cap(\Omega+(p-j))$, which can happen only for $y$ in a measure zero set. 
		
		If $x-p=y+j$ with $x\in\Omega$ and $j\in\{0,1,\dots,p-1\}$ then $\Omega\ni x=y+p+j\in\Omega+j$ and therefore $j$ has to be $0$ and $x-p\in\Omega$. By induction, we get $x+pk\in\Omega$, for $k\in\bz$. This proves (i).

		For (ii), it is clear from (i) that $\Omega_0+p\bz\subseteq\Omega$. Now take $x\in\Omega$. Then we can write $x=y+pk$ for some $y\in[0,p)$ and $k\in\bz$. With (i), $y=x-kp\in\Omega$. Thus $y\in\Omega_0$ and $x\in\Omega_0+p\bz$. This proves the reverse inclusion. 
		
		Since $\Omega_0$ is contained in $[0,p)$, the sets $\Omega_0+pk$, $k\in\bz$, are mutually disjoint. This proves (ii). 
		
		For (iii), note that 
		$$ \Omega_0+\bz=\Omega_0+p\bz+\{0,\dots,p-1\}=\Omega+\{0,\dots,p-1\}=\br.$$
	 	The sets $\Omega_0+pk+j$, with $k\in\bz$ and $j\in\{0,\dots,p-1\}$ are disjoint by (ii) and the fact that $\Omega$ tiles $\br$ by $\{0,\dots,p-1\}$. 
	 	Thus $\Omega_0$ tiles $\br$ by $\bz$. 
	 	
	 	With Theorem \ref{thf1}, we get that $\Omega_0$ has spectrum $\bz$. Also, let $\Omega_{0,k}=\Omega_0\cap[k,k+1)$ and let $I_k=\Omega_{0,k}-k\subset [0,1)$, for all $k\in\bz$. Then the sets $\{I_k:k\in\bz\}$ are disjoint (otherwise the sets $\Omega_0+k$ are not), and their union is $[0,1)$ because, for $x\in[0,1)$ there is a unique $k\in\bz$ and $x_0\in\Omega_0$ such that $x=x_0-k$. So $x_0\in\Omega_{0,k}$ and $x\in I_k$. Thus $\Omega_0$ is congruent to $[0,1)$ modulo $\bz$ and 
	 	$$|\Omega_0|=\sum_{k\in\bz}|\Omega_{0,k}|=\sum_{k\in\bz}|I_k|=1.$$
	\end{proof}

	\begin{proposition}
		\label{pr2} For $n\in\bn$ let
		$$\Omega_n:=\Omega_0+p\{-n,-n+1,\dots,n-1,n\}.$$
		\begin{enumerate}
			\item The sets $\Omega_n$ are increasing and cover $\Omega$:
			$$\cup_{n\in\bn}\Omega_n=\Omega.$$
			\item The set $\Omega_n$ tiles $\Omega$ by $p(2n+1)\bz$.
			\item The set $\Omega_n$ has spectrum 
			$$\Lambda_n:=\frac1{p(2n+1)}\left\{-n,-n+1,\dots,n\right\}+\bz.$$
		\end{enumerate}
	\end{proposition}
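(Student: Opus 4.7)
\textbf{Plan for the proof of Proposition \ref{pr2}.} The plan is to handle the three parts in order, with (iii) being the substantive step.

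Part (i) is immediate: $\Omega_n\subset\Omega_{n+1}$ because the translation set $p\{-n,\dots,n\}$ grows with $n$, and $\bigcup_{n\in\bn}\Omega_n=\Omega_0+p\bz=\Omega$ by Proposition \ref{pr1}(ii).

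Part (ii) reduces to the combinatorial observation that $\{-n,\dots,n\}$ is a complete residue system modulo $2n+1$, so $p\{-n,\dots,n\}+p(2n+1)\bz=p\bz$ as a disjoint union. Combined with the disjoint tiling $\Omega=\Omega_0+p\bz$ from Proposition \ref{pr1}(ii), this yields $\Omega_n+p(2n+1)\bz=\Omega_0+p\bz=\Omega$ with mutually disjoint translates, i.e.\ $\Omega_n$ tiles $\Omega$ by $p(2n+1)\bz$.

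For part (iii), my approach is to exploit the isomorphism
$$L^2(\Omega_n)\cong\bc^{2n+1}\otimes L^2(\Omega_0)$$
induced by the disjoint decomposition $\Omega_n=\bigsqcup_{j=-n}^n(\Omega_0+jp)$ together with the shift isometries $L^2(\Omega_0+jp)\to L^2(\Omega_0)$. Under this identification, $e_\lambda|_{\Omega_n}$ corresponds to $w_\lambda\otimes e_\lambda|_{\Omega_0}$, where $w_\lambda=(e^{2\pi i\lambda jp})_{j=-n}^n\in\bc^{2n+1}$. For $\lambda=\frac{k}{p(2n+1)}+m\in\Lambda_n$ with $k\in\{-n,\dots,n\}$ and $m\in\bz$, the vector $w_\lambda$ reduces to $w_k:=(e^{2\pi ikj/(2n+1)})_j$, and the family $\{w_k\}_{k=-n}^n$ is the (orthogonal) DFT basis of $\bc^{2n+1}$. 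For each fixed $k$, the family $\{e^{2\pi i(k/(p(2n+1))+m)y}|_{\Omega_0}:m\in\bz\}$ is an orthonormal basis of $L^2(\Omega_0)$: it is obtained from the ONB $\{e^{2\pi imy}|_{\Omega_0}:m\in\bz\}$ (guaranteed by Proposition \ref{pr1}(iii), since $\Omega_0$ has spectrum $\bz$ and $|\Omega_0|=1$) by applying the unitary operator of multiplication by the unimodular function $e^{2\pi iky/(p(2n+1))}$. A standard tensor-product argument then assembles these into an orthogonal basis of $\bc^{2n+1}\otimes L^2(\Omega_0)$ indexed exactly by $\Lambda_n$, which is the desired statement.

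The main obstacle is the completeness half of part (iii). Orthogonality itself is a routine calculation: the integral $\int_{\Omega_n}e^{2\pi i(\lambda-\lambda')x}\,dx$ factors as $\bigl(\sum_{j=-n}^n e^{2\pi i(\lambda-\lambda')jp}\bigr)\widehat{\chi_{\Omega_0}}(-(\lambda-\lambda'))$, and at least one factor vanishes for each $\lambda\ne\lambda'$ in $\Lambda_n$---the Fourier factor when $\lambda-\lambda'\in\bz\setminus\{0\}$ (using that $\bz$ is a spectrum for $\Omega_0$), and the root-of-unity sum otherwise (using that the relevant index lies in $\{-2n,\dots,2n\}\setminus\{0\}$). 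Completeness, however, is not a trivial dimension count since $L^2(\Omega_n)$ is infinite-dimensional; it genuinely requires combining the invertibility of the DFT matrix $(e^{2\pi ikj/(2n+1)})_{j,k=-n}^n$ with the full basis property of $\bz$ as a spectrum of $\Omega_0$, which is exactly what the tensor-product formalism above packages cleanly.
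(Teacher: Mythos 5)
Your proof is correct, and for the substantive part (iii) it takes a genuinely different route from the paper. Parts (i) and (ii) coincide with the paper's argument. For (iii), the paper does not build a basis directly: it proves and invokes a Parseval-type criterion (Lemma \ref{lemsp}, that $\Lambda$ is a spectrum for a finite measure $\mu$ iff $\sum_{\lambda\in\Lambda}|\hat\mu(t+\lambda)|^2=\mu(\br)^2$ for all $t$), writes $\Leb_{\Omega_n}=\Leb_{\Omega_0}\ast\Sha_{pL_n}$ so that $\hat\chi_{\Omega_n}=\hat\chi_{\Omega_0}\hat\Sha_{pL_n}$, and then combines the identity $\sum_{k\in\bz}|\hat\chi_{\Omega_0}(t+k)|^2=1$ (spectrum $\bz$), the DFT identity for $\Sha_{pL_n}$, and the $\tfrac1p$-periodicity of $\hat\Sha_{pL_n}$ to verify the criterion for $\Lambda_n$. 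You instead realize $L^2(\Omega_n)\cong\bc^{2n+1}\otimes L^2(\Omega_0)$ via the disjoint union $\Omega_n=\bigsqcup_{j=-n}^n(\Omega_0+jp)$, identify $e_\lambda$ with $w_k\otimes(\text{a unimodular multiple of the exponential ONB of }L^2(\Omega_0))$, and check orthogonality and completeness directly; since the second-factor basis depends on $k$, completeness should be (and implicitly is, in your formalism) justified by splitting $\bc^{2n+1}\otimes L^2(\Omega_0)=\bigoplus_k(\bc w_k)\otimes L^2(\Omega_0)$ using orthogonality of the DFT vectors and then using, for each $k$, the ONB property in the second factor. Both arguments rest on the same two inputs (the spectrum $\bz$ of $\Omega_0$ and the finite DFT), but your version is more elementary and self-contained at the Hilbert-space level, avoiding Lemma \ref{lemsp} and any Fourier-side manipulation of measures, while the paper's criterion-based computation is shorter once the lemma is available and fits the measure-theoretic framework (convolutions of measures, Dirac combs) that the rest of the paper uses.
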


	\begin{proof}

	(i) is clear, from Proposition \ref{pr1}(ii). For (ii), note that the set $L_n=\{-n,\dots,n\}$ tiles $\bz$ by $(2n+1)\bz$. Therefore 
	$$\Omega_n+p(2n+1)\bz=\Omega_0+pL_n+p(2n+1)\bz=\Omega_0+p\bz,$$
	and the sets $\Omega_n+p(2n+1)k=\Omega_0+pL_n+p(2n+1)k$, $k\in\bz$ are disjoint. 
	
	For (iii), we will use the following well known lemma (see e.g., \cite{JP98a}). We include the proof for convenience. 
	
	\begin{lemma}\label{lemsp}
		Let $\mu$ be finite Radon measure on $\br$ with total measure $\mu(\br)=m$, and let $\Lambda$ be a finite or countable subset of $\br$. The measure $\mu$ has spectrum $\Lambda$ if and only if 
		\begin{equation}
			\label{eqsp}
			\sum_{\lambda\in\Lambda}|\hat\mu(t+\lambda)|^2=m^2,
		\end{equation}
		where $\hat\mu$ is the Fourier transform of the measure $\mu$,
		$$\hat\mu(t)=\int e^{-2\pi itx}\,d\mu(x),\quad(t\in\br).$$
	\end{lemma}
	
	\begin{proof}
		Suppose $\Omega$ has spectrum $\Lambda$. Then $$\{\frac{1}{\sqrt{m}}e_\lambda : \lambda\in\Lambda\}$$ is an orthonormal basis for $L^2(\mu)$. Applying the Parseval identity to the function $e_{-t}$, $t\in\br$ we obtain 
		$$m=\|e_{-t}\|^2=\sum_{\lambda\in\Lambda}\left|\ip{e_{-t}}{\frac{1}{\sqrt{m}}e_\lambda}_{L^2(\mu)}\right|^2
		=\frac{1}{m}\sum_{\lambda\in\Lambda}\left|\hat\mu(t+\lambda)\right|^2$$
		and this implies \eqref{eqsp}.
		
		For the converse, if we plug $t=-\lambda_0$ in \eqref{eqsp}, since $\hat\mu(0)=|\Omega|=m$, we get that $\hat\mu(-\lambda_0+\lambda)=0$ for $\lambda\neq \lambda_0$, and this proves that the exponentials $e_\lambda$ and $e_{\lambda_0}$ are orthogonal. With the same computation as above, \eqref{eqsp} then shows that 
		
		$$\|e_{-t}\|^2=\sum_{\lambda\in\Lambda}\left|\ip{e_{-t}}{\frac{1}{\sqrt{m}}e_\lambda}_{L^2(\mu)}\right|^2.$$
		On the right hand side we have the norm squared of the projection of $e_{-t}$ onto the closed span of the functions $\{e_\lambda :\lambda\in\Lambda\}$. This shows that $e_{-t}$ is in this closed span. By the Stone-Weierstrass theorem and by approximation by continuous compactly supported functions, the functions $e_{-t}$ span the entire space $L^2(\mu)$. It follows that $\{e_\lambda :\lambda\in\Lambda\}$ also span the entire space $L^2(\mu)$ and therefore they form an orthogonal basis for it. 
	\end{proof}

	Returning to the proof of Proposition \ref{pr2}(iii), we apply Lemma \ref{lemsp} to the Lebesgue measure on $\Omega_n$. Note that this measure is the convolution  $\mbox{Leb}_{\Omega_n}=\mbox{Leb}_{\Omega_0}\ast \Sha_{pL_n}$. Therefore its Fourier transform is 
	$$\hat\chi_{\Omega_n}(t)=\hat\chi_{\Omega_0}(t)\hat\Sha_{pL_n}(t).$$
	
	We also have
	$$\hat\Sha_{pL_n}(t)=\int e^{-2\pi itx}\,d\Sha_{pL_n}(x)=\sum_{l=-n}^ne^{-2\pi iplt},$$
	which implies that $\hat\Sha_{pL_n}$ has period $1/p$. 
	
	It is also easy to check that the measure $\Sha_{pL_n}$ has spectrum $\frac{1}{p(2n+1)}L_n$ (this is basically the discrete Fourier transform on $\bz_{2n+1}$, rescaled by $p$). Therefore, with Lemma \ref{lemsp}, 
	
	\begin{equation}
		\label{eq2.1}
			\sum_{j\in L_n}\left|\hat\Sha_{pL_n}\left(t+\frac j{p(2n+1)}\right)\right|^2=|L_n|^2=(2n+1)^2,\quad(t\in\br).
	\end{equation}

	Since $\Omega_0$ has spectrum $\bz$, we have that 
	
	\begin{equation}
		\label{eq2.2}
		\sum_{k\in\bz}\left|\hat\chi_{\Omega_0}(t+k)\right|^2=|\Omega_0|^2=1,\quad(t\in\br).
	\end{equation}

	We compute 
	
	$$\sum_{j\in L_n}\sum_{k\in\bz}\left|\hat\chi_{\Omega_n}\left(t+\frac j{p(2n+1)}+k\right)\right|^2$$$$=\sum_{j\in L_n}\sum_{k\in\bz}\left|\hat\chi_{\Omega_0}\left(t+\frac{j}{p(2n+1)}+k\right)\right|^2\left|\hat\Sha_{pL_n}\left(t+\frac{j}{p(2n+1)}+k\right)\right|^2$$
	and using the periodicity of $\hat\Sha_{pL_n}$,
	$$=\sum_{j\in L_n}\left|\hat\Sha_{pL_n}\left(t+\frac{j}{p(2n+1)}\right)\right|^2\sum_{k\in\bz}\left|\hat\chi_{\Omega_0}\left(t+\frac{j}{p(2n+1)}+k\right)\right|^2$$
	and with \eqref{eq2.2}, and then \eqref{eq2.1},
	$$=\sum_{j\in L_n}\left|\hat\Sha_{pL_n}\left(t+\frac{j}{p(2n+1)}\right)\right|^2\cdot 1=(2n+1)^2=|\Omega_n|^2.$$
	Then, with Lemma \ref{lemsp}, we conclude that $\frac{1}{p(2n+1)}L_n+\bz$ is a spectrum for $\Omega_n$. 
	\end{proof}
	
	Next we prove the first condition for $\Omega$ to be spectral, from Definition \ref{defp1}, namely that the Fourier transform is isometric from $L^2(\Omega)$ to the $L^2$-space of the pair measure, which is $p$ times the Lebesgue measure on $\left[-\frac1{2p},\frac1{2p}\right]+\bz$. The notation for this $L^2$-space is $L^2\left(\left[-\frac1{2p},\frac1{2p}\right]+\bz,pdx\right)$. 
	
	\begin{proposition}
		\label{iso}
		The Fourier transform  $$L^2(\Omega)\ni f\to \hat f|_{\left[-\frac1{2p},\frac1{2p}\right]+\bz}\in L^2\left(\left[-\frac1{2p},\frac1{2p}\right]+\bz,pdx\right)$$ is an isometry.
	\end{proposition}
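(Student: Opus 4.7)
The plan is to leverage the discrete spectra $\Lambda_n$ from Proposition \ref{pr2}(iii) and pass to the limit $n\to\infty$, interpreting the resulting Parseval-type formulas as Riemann sums approximating the continuous pair measure on $I:=\left[-\tfrac{1}{2p},\tfrac{1}{2p}\right]+\bz$.

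First I would establish the identity on the dense subclass $C_c^\infty(\Omega)$. Fix $f\in C_c^\infty(\Omega)$. From Propositions \ref{pr1} and \ref{pr2} one checks that $\Omega_n=\Omega\cap[-np,(n+1)p)$, so the bounded support of $f$ lies in $\Omega_N$ for all sufficiently large $N$. Because $f$ is smooth and compactly supported, $\hat f$ is a Schwartz function, and the periodization
$$G(x):=\sum_{k\in\bz}|\hat f(x+k)|^2$$
is a smooth, $\bz$-periodic function. For each $n\geq N$ one has $f\in L^2(\Omega_n)$ with $|\Omega_n|=2n+1$, and the orthogonal basis $\{e_\lambda:\lambda\in\Lambda_n\}$ of $L^2(\Omega_n)$ from Proposition \ref{pr2}(iii) gives, via Parseval,
$$\|f\|_{L^2(\Omega)}^2=\|f\|_{L^2(\Omega_n)}^2=\frac{1}{2n+1}\sum_{\lambda\in\Lambda_n}|\hat f(\lambda)|^2=\frac{1}{2n+1}\sum_{j=-n}^{n}G\!\left(\frac{j}{p(2n+1)}\right),$$
where the last equality uses $\Lambda_n=\frac{1}{p(2n+1)}\{-n,\dots,n\}+\bz$ together with the definition of $G$.

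Next I would let $n\to\infty$. The rightmost expression is a Riemann sum with uniform mesh $\frac{1}{p(2n+1)}\to 0$ for the integral $p\int_{-1/(2p)}^{1/(2p)}G(x)\,dx$; since $G$ is continuous, this sum converges to that integral. The left-hand side is independent of $n$, so
$$\|f\|_{L^2(\Omega)}^2=p\int_{-1/(2p)}^{1/(2p)}G(x)\,dx=p\int_I|\hat f(x)|^2\,dx=\|\hat f|_I\|_{L^2(I,p\,dx)}^2.$$
The passage from $C_c^\infty(\Omega)$ to the full space $L^1(\Omega)\cap L^2(\Omega)$ is then routine: $C_c^\infty(\Omega)$ is dense in $L^2(\Omega)$, so the map $f\mapsto\hat f|_I$ extends by continuity to an isometry from $L^2(\Omega)$ into $L^2(I,p\,dx)$; for $f\in L^1\cap L^2$ one recovers agreement with the pointwise Fourier transform by approximating $f$ simultaneously in $L^1$ and $L^2$, which forces $\hat f_n\to\hat f$ uniformly.

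The main obstacle is the Riemann sum step, which requires enough regularity on $G$ (equivalently, sufficient decay of $\hat f$). This is precisely what forces the initial restriction to the dense subclass $C_c^\infty(\Omega)$, where $\hat f$ is Schwartz and $G$ is smooth. Conceptually the mechanism is transparent: the spectrum $\Lambda_n$ populates the band $I$ with asymptotic density $p(2n+1)$, while the normalization $\frac{1}{2n+1}=p\cdot\frac{1}{p(2n+1)}$ reproduces exactly the $p\,dx$ weight of the pair measure.
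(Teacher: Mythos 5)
Your proof is correct and follows essentially the same route as the paper: apply Parseval in $L^2(\Omega_n)$ with the spectrum $\Lambda_n$ from Proposition \ref{pr2}, recognize the result as a Riemann sum converging to $p\int_{\left[-\frac1{2p},\frac1{2p}\right]+\bz}|\hat f|^2\,dx$ for Schwartz $\hat f$, and extend by density of $C_c^\infty(\Omega)$. Your only deviation is packaging the double sum into the periodization $G(x)=\sum_{k\in\bz}|\hat f(x+k)|^2$, which makes the Riemann-sum convergence a statement about a continuous function on the compact interval $\left[-\frac1{2p},\frac1{2p}\right]$ — a slightly cleaner justification of the limit the paper states more informally.
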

	
	\begin{proof}
		Let $f$ be a $C^\infty$-function compactly supported in $\Omega$.   Then $\hat f$ is a Schwartz function. 
		
		Let $n$ be big enough so that the support of $f$ is covered by $\Omega_n=\Omega_0+p\{-n,\dots,n\}$. Since $\Omega_n$ has spectrum $\frac{1}{p(2n+1)}L_n+\bz$, we have, using the Parseval identity, 
		$$\int_{\Omega}|f(x)|^2\,dx=\int_{\Omega_n}|f(x)|^2\,dx=\sum_{j\in L_n,k\in\bz}\left|\ip{f}{\frac{1}{\sqrt{2n+1}}e_{\frac{j}{p(2n+1)}+k}}_{L^2(\Omega_n)}\right|^2$$
		
	$$=\frac{1}{2n+1}\sum_{j\in L_n,k\in\bz}\left|\hat f\left(\frac{j}{p(2n+1)}+k\right)\right|^2.$$
	This is a Riemann sum for the function $|\hat f|^2$ with sampling points $\frac{j}{p(2n+1)}+k$, $j\in L_n$, $k\in\bz$, and thus the length of the intervals of the partition is $\frac{1}{p(2n+1)}$. Since the common factor is $\frac{1}{2n+1}$ we have to adjust it by the constant $p$, and therefore the function that is integrated is actually $p|\hat f|^2$ on the set $\left[-\frac1{2p},\frac1{2p}\right]+\bz$. 
	
	Since $\hat f$ is a Schwartz function, these Riemann sums converge to the integral 
	$$\int_{\left[-\frac1{2p},\frac1{2p}\right]+\bz}|\hat f(t)|^2p\,dt.$$
	This implies that 
	$$\int_\Omega|f(x)|^2\,dx=\int_{\left[-\frac1{2p},\frac1{2p}\right]+\bz}|\hat f(t)|^2p\,dt.$$
	Since $C_0^\infty(\Omega)$ is dense in $L^2(\Omega)$, we get that the Fourier transform establishes an isometry between the two $L^2$-spaces. 
		
	\end{proof}
	
	\begin{proposition}
		\label{pr3}
		The Fourier transform from $L^2(\Omega)$ to $L^2\left(\left[-\frac1{2p},\frac1{2p}\right]+\bz, pdx\right)$ is onto. 
	\end{proposition}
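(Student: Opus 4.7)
The plan is to show surjectivity by proving that the adjoint $\mathscr F^{*}:L^2(\mu)\to L^2(\Omega)$ of the isometry $\mathscr F$ from Proposition \ref{iso} is injective. Writing $S:=[-\tfrac{1}{2p},\tfrac{1}{2p}]+\bz$, for $g\in L^2(\mu)$ and $f\in L^2(\Omega)$ the definition of $\mathscr F$ gives $\langle\mathscr F f,g\rangle_{L^2(\mu)}=\int_\br\hat f(t)\,\overline{pg(t)\chi_S(t)}\,dt$, which by Plancherel on $\br$ equals $\int_\Omega f(x)\overline{\psi(x)}\,dx$, where $\psi\in L^2(\br)$ is the function whose Fourier transform equals $pg\chi_S$. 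Hence $\mathscr F^{*}g=\chi_\Omega\psi$, and $\mathscr F^{*}g=0$ forces $\psi=0$ a.e.\ on $\Omega$. The entire proposition therefore reduces to the uniqueness statement: \emph{if $\psi\in L^2(\br)$ has $\hat\psi$ supported in $S$ and $\psi|_\Omega=0$, then $\psi=0$.}

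To prove uniqueness I would exploit the tiling $\br\setminus\Omega=\bigsqcup_{j=1}^{p-1}(\Omega+j)$ to write $\psi=\sum_{j=1}^{p-1}T_j\phi_j$ uniquely, with $\phi_j:=T_{-j}(\psi\chi_{\Omega+j})\in L^2(\Omega)$. Disjointness of the supports $\Omega+j$ gives $\|\psi\|^2_{L^2(\br)}=\sum_j\|\phi_j\|^2$, and $\hat\psi(t)=\sum_j e^{-2\pi ijt}\hat\phi_j(t)$. The key geometric observation is that for $j,j'\in\{1,\ldots,p-1\}$ with $j\ne j'$ we have $j-j'\in\{-(p-1),\ldots,p-1\}\setminus\{0\}$, so the tiling forces $\Omega\cap(\Omega+(j-j'))=\emptyset$; together with $e^{-2\pi i(j-j')t}\hat\phi_j=\widehat{T_{j-j'}\phi_j}$, this yields
\[
\int_\br e^{-2\pi i(j-j')t}\hat\phi_j\overline{\hat\phi_{j'}}\,dt=\langle T_{j-j'}\phi_j,\phi_{j'}\rangle_{L^2(\br)}=0\qquad(j\ne j').
\]

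The main computation then expands $0=\int_{S^c}|\hat\psi|^2\,dt$. The diagonal terms contribute $\sum_j\int_{S^c}|\hat\phi_j|^2\,dt=\tfrac{p-1}{p}\sum_j\|\phi_j\|^2$, since $\int_\br|\hat\phi_j|^2\,dt=\|\phi_j\|^2$ by Plancherel and $\int_S|\hat\phi_j|^2\,dt=\|\phi_j\|^2/p$ by Proposition \ref{iso}. By the vanishing above, the off-diagonal terms on $S^c$ reduce to $-\sum_{j\ne j'}\int_S e^{-2\pi i(j-j')t}\hat\phi_j\overline{\hat\phi_{j'}}\,dt$, producing the identity
\[
\sum_{j\ne j'}\int_S e^{-2\pi i(j-j')t}\hat\phi_j\overline{\hat\phi_{j'}}\,dt=\frac{p-1}{p}\sum_j\|\phi_j\|^2.
\]
Two Cauchy-Schwarz estimates bound the left side: each integral is at most $\|\phi_j\|\|\phi_{j'}\|/p$ (again using Proposition \ref{iso} for the $L^2(S)$-norms), and $\sum_{j\ne j'}\|\phi_j\|\|\phi_{j'}\|\le(p-2)\sum_j\|\phi_j\|^2$ by Cauchy-Schwarz on the $p-1$ non-negative terms. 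The resulting inequality $\tfrac{p-1}{p}\sum\|\phi_j\|^2\le\tfrac{p-2}{p}\sum\|\phi_j\|^2$ forces each $\phi_j=0$, so $\psi=0$ and hence $g=0$.

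The delicate part is making the final chain of estimates close: the diagonal factor $(p-1)/p$ produced by Proposition \ref{iso} must strictly exceed the off-diagonal upper bound $(p-2)/p$, and the gap $1/p$ is precisely what collapses the norms. The sharp constant $p$ in Proposition \ref{iso} is what makes this two-step Cauchy-Schwarz argument succeed.
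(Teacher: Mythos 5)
Your argument is correct, and it takes a genuinely different route from the paper's. The paper proves surjectivity constructively: for $g\in C_0^\infty$ supported in a single interval $\left[-\frac1{2p},\frac1{2p}\right]+k_0$ it exhibits an explicit preimage $f=\chi_\Omega\cdot\check g$, computes $\hat f$ by convolving $g$ with the distributional Fourier transform of the $p$-periodic function $\chi_\Omega$ (a weighted Dirac comb on $\frac1p\bz$, Lemma \ref{lem3}), and uses the structure from Proposition \ref{pr1} (namely $\Omega=\Omega_0+p\bz$ with $\Omega_0$ congruent to $[0,1)$ mod $\bz$, so $\hat\chi_{\Omega_0}(l)=\delta_{l,0}$) to see that only the $k=0$ term survives on $S=\left[-\frac1{2p},\frac1{2p}\right]+\bz$; density of such $g$ plus the isometry finishes. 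You instead argue by duality: since the isometry has closed range, surjectivity is equivalent to injectivity of the adjoint, which you correctly identify as the sampling-type uniqueness statement that a function $\psi\in L^2(\br)$ with $\hat\psi$ supported in $S$ and $\psi|_\Omega=0$ must vanish; you then prove this by decomposing $\psi$ over the tiling translates $\Omega+j$, $j=1,\dots,p-1$, using the tiling to kill the off-diagonal inner products over all of $\br$, and closing a two-step Cauchy--Schwarz estimate whose success hinges on the exact constant $\int_S|\hat\phi|^2\,dt=\|\phi\|^2/p$ from Proposition \ref{iso} (the gap $\frac{p-1}{p}$ versus $\frac{p-2}{p}$). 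What each approach buys: the paper's proof is explicit and identifies preimages of a dense class of functions, while yours avoids tempered distributions and the structural Proposition \ref{pr1} altogether (using only the tiling hypothesis and Proposition \ref{iso}) and yields as a by-product an interesting uniqueness/sampling statement for functions bandlimited to $S$. Two small points worth making explicit if you write this up: the range of $\mathscr F$ is closed because it is an isometry (this is what lets injectivity of $\mathscr F^*$ give surjectivity), and the identity of Proposition \ref{iso}, proved there for $C_0^\infty(\Omega)$, extends to all $\phi\in L^2(\Omega)$ with $\hat\phi$ the Plancherel transform by continuity of both $\mathscr F$ and the restriction map $L^2(\br)\to L^2(S,p\,dx)$, which is what your diagonal and off-diagonal estimates use.
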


	\begin{proof}
		First, note that since $\Omega_0$ is congruent to $[0,1]$ $\mod\bz$, we have for $k\in\bz$, 
		\begin{equation}
			\label{eq3.1}
				\hat\chi_{\Omega_0}(k)=\int_{\Omega_0}e^{-2\pi i kx}\,dx=\int_{[0,1]}e^{-2\pi ikx}\,dx=\left\{\begin{array}{cc}1,&k=0,\\ 0,&k\neq 0.\end{array}\right.
		\end{equation}
		
		The function $\chi_{\Omega}=\chi_{\Omega_0+p\bz}$ has period $p$. Therefore, its Fourier transform, as a tempered distribution, is
		\begin{equation}
			\label{eq3.2}
			\hat\chi_{\Omega}=\frac1p\sum_{k\in\bz}\hat\chi_{\Omega_0}(k/p)\delta_{k/p}.
		\end{equation}
		
		This follows from the next lemma:
		\begin{lemma}\label{lem3}
			Let $\tilde h$ be a function of period $p$ on $\br$ and $h=\tilde h|_{[0,p]}\in L^2[0,p]$. Represent $\tilde h$ in the Fourier basis $\frac{1}{\sqrt{p}}e_{k/p}$, $k\in\bz$:
			$$\tilde h=\sum_{k\in\bz}\frac1p\ip{h}{e_{k/p}}_{L^2[0,p]}e_{k/p}=\frac{1}{p}\sum_{k\in\bz}\hat h(k/p) e_{k/p}.$$
			Then the Fourier transform of $\tilde h$ as a tempered distribution is 
			\begin{equation}
				\label{eql3.1}
					\hat {\tilde h}=\frac1p\sum_{k\in\bz} \hat h(k/p)\delta_{k/p}.
			\end{equation}

		\end{lemma}
		
		\begin{proof}
			Let $\Lambda_{\tilde h}$ be the tempered distribution associated to $\tilde h$, 
			$$\Lambda_{\tilde h}(\varphi)=\int_{\br}\tilde h(x)\varphi(x)\,dx,\quad(\varphi\mbox{ Schwartz function}).$$
			Then, for a Schwartz function $\varphi$, 
			$$\hat\Lambda_{\tilde h}(\varphi)=\Lambda_{\tilde h}(\hat\varphi)=\frac{1}{p}\sum_{k\in\bz} \hat h(k/p)\int_{\br}e^{2\pi itk/p}\hat\varphi(t)\,dt$$
			with the Fourier Inversion Formula, 
			$$=\frac{1}{p}\sum_{k\in\bz} \hat h(k/p)\varphi(k/p),$$
			and \eqref{eql3.1} follows. 
			The sums are convergent because $\sum |\hat h(k/p)|^2<\infty$ and $\varphi,\hat\varphi$ are Schwartz functions, thus they have rapid decay.

		\end{proof}
		
		Take now some $C_0^\infty$-function $g$ supported on one of the intervals $\left[-\frac1{2p},\frac1{2p}\right]+k_0$ for some $k_0\in\bz$, and let $\varphi$ be its inverse Fourier transform; it is a Schwartz function. Let
		$$f=p\chi_\Omega\cdot \varphi.$$
		Then 
		$$\hat f(t)=p\hat\chi_\Omega\ast\hat \varphi(t)=\left(\sum_{k\in\bz}\hat\chi_{\Omega_0}(k/p)\delta_{k/p}\right)\ast g (t)=\sum_{k\in\bz}\hat\chi_{\Omega_0}(k/p)g(t-k/p).$$
		We need to restrict $\hat f$ to $\left[-\frac1{2p},\frac1{2p}\right]+\bz$. Note that $g(t-k/p)$ is supported on $\left[-\frac1{2p},\frac1{2p}\right]+\frac kp+k_0$, and these sets are disjoint for different $k$'s. Thus, when we restrict $\hat f$ to $\left[-\frac1{2p},\frac1{2p}\right]+\bz$, we will keep only the terms with $k=pl$ for $l\in\bz$:
		$$\hat f\cdot \chi_{\left[-\frac1{2p},\frac1{2p}\right]+\bz}(t)=\sum_{l\in\bz}\hat\chi_{\Omega_0}(pl/p)g(t-pl/p).$$
		But, by \eqref{eq3.1}, $\hat\chi_{\Omega_0}(l)=0$ unless $l=0$, and therefore the only term left is $g(t)$. Thus 
		$$\hat f|_{\left[-\frac1{2p},\frac1{2p}\right]+\bz}=g.$$
		Since the linear combinations of functions like $g$ are dense in $L^2(\left[-\frac1{2p},\frac1{2p}\right]+\bz,pdx)$ and since we proved that the map is an isometry, it follows that the map is onto.
	\end{proof}
	
	Propositions \ref{iso} and \ref{pr3} show the direct implication: if $\Omega$ tiles with $\{0,1,\dots,p-1\}$ then it is spectral with the desired pair measure. 
	
	We move on now to the converse and assume that $\Omega$ is spectral with the given pair measure.
	
	\begin{proposition}
		\label{pr4}
		Suppose $\Omega$ is spectral with pair measure $p\Leb_{\left[-\frac1{2p},\frac1{2p}\right]+\bz}$. Let $f,g$ be in $L^2(\Omega)$. Then $\Per(\hat f\cj{\hat g})$ has period $1/p$.
	\end{proposition}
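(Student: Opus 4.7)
The plan is to combine a modulation trick with the Plancherel identity for the spectral pair $(\Omega,\mu)$. For any $\lambda\in\br$, multiplication by $e_\lambda$ is unitary on $L^2(\Omega)$, so $e_\lambda f,\,e_\lambda g\in L^2(\Omega)$ and $\int_\Omega(e_\lambda f)\cj{(e_\lambda g)}\,dx=\int_\Omega f\cj g\,dx$. I would work first with $f,g\in L^1(\Omega)\cap L^2(\Omega)$, where the classical identities $\widehat{e_\lambda f}=\hat f(\cdot-\lambda)$ and $\widehat{e_\lambda g}=\hat g(\cdot-\lambda)$ hold. Polarizing the Plancherel identity for the spectral pair and applying it to $e_\lambda f,\,e_\lambda g$ then yields
$$\int_\Omega f\cj g\,dx \;=\; p\int_{[-\tfrac{1}{2p},\tfrac{1}{2p}]+\bz}\hat f(t-\lambda)\,\cj{\hat g(t-\lambda)}\,dt.$$

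Next I would change variables $s=t-\lambda$: the integration domain becomes $I_\lambda+\bz$ with $I_\lambda:=[-\tfrac{1}{2p}-\lambda,\tfrac{1}{2p}-\lambda]$, an interval of length $1/p$. Unfolding the integer sum via the standard identity $\int_{I_\lambda+\bz}h(s)\,ds=\int_{I_\lambda}\Per(h)(s)\,ds$ gives
$$\int_\Omega f\cj g\,dx \;=\; p\int_{-\tfrac{1}{2p}-\lambda}^{\tfrac{1}{2p}-\lambda}\Per(\hat f\,\cj{\hat g})(s)\,ds.$$
Setting $F:=\Per(\hat f\,\cj{\hat g})$, this exhibits $\int_a^{a+1/p}F(s)\,ds$ as a constant independent of $a\in\br$.

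To finish I would invoke Lebesgue differentiation: the absolutely continuous map $a\mapsto\int_a^{a+1/p}F(s)\,ds$ has derivative $F(a+1/p)-F(a)=0$ almost everywhere, so $F$ is $1/p$-periodic. The extension from $L^1\cap L^2$ to all of $L^2(\Omega)$ is then obtained by density, using that $C_0^\infty(\Omega)$ is dense in $L^2(\Omega)$, that $e_\lambda$-modulation is unitary on $L^2(\Omega)$, and that the Fourier isometry $\mathscr F\colon L^2(\Omega)\to L^2(\mu)$ is continuous. The most delicate step is this density extension: for general $f\in L^2(\Omega)\setminus L^1(\Omega)$ the identity $\mathscr F(e_\lambda f)=\mathscr F(f)(\cdot-\lambda)$ must be interpreted and justified as an identity of $L^2(\mu)$-classes, which I would handle by approximating $f,g$ in $L^2(\Omega)$ by compactly supported smooth functions and passing to the limit inside the periodization sum.
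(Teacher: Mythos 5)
Your proposal is correct and follows essentially the same route as the paper: modulation invariance of the inner product, the Plancherel identity for the pair measure, unfolding the integer sum into the periodization to show the sliding-window integral $\int_a^{a+1/p}\Per(\hat f\cj{\hat g})$ is constant, and then Lebesgue differentiation to conclude $1/p$-periodicity. Your extra care with the $L^1\cap L^2$ approximation and the identity $\mathscr F(e_\lambda f)=\mathscr F(f)(\cdot-\lambda)$ is a refinement the paper passes over silently, but it does not change the argument.
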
	
	
	\begin{proof}
		Let $a\in\br$. We have 
		$$\ip{f}{g}_{L^2(\Omega)}=\ip{e_af}{e_ag}_{L^2(\Omega)}=\int_{\left[-\frac1{2p},\frac1{2p}\right]+\bz}\hat f(t-a)\cj{\hat g}(t-a)p\,dt$$
		$$=\sum_{k\in\bz}\int_{\left[-\frac1{2p},\frac1{2p}\right]}\hat f(t-a+k)\cj{\hat g}(t-a+k)p\,dt$$$$=\int_{\left[-\frac1{2p},\frac1{2p}\right]}\Per(\hat f\cj{\hat g})(t-a)p\,dt=\int_{\left[-\frac1{2p},\frac1{2p}\right]-a}\Per(\hat f\cj{\hat g})(t)p\,dt.$$
		
		Let $h(t)=\Per(\hat f\cj{\hat g})(t)$. Then the function 
		$$a\to \int_{\left[-\frac1{2p},\frac1{2p}\right]-a} h(t)\,dt$$
		is constant. 
		
		Let $-a_1=\frac{1}{2p}+b+\epsilon$, $-a_2=\frac1{2p}+b$, for $b\in\br$ and $\epsilon>0$. We have
		$$0=\int_{\left[-\frac1{2p},\frac1{2p}\right]-a_1} h(t)\,dt-\int_{\left[-\frac1{2p},\frac1{2p}\right]-a_2} h(t)\,dt=\int_{[b+\epsilon,b+\frac1p+\epsilon]} h(t)\,dt-\int_{[b,b+\frac1p]} h(t)\,dt$$$$=\int_{[b+\frac1p,b+\frac1p+\epsilon]} h(t)\,dt-\int_{[b,b+\epsilon]} h(t)\,dt.$$
		Divide by $\epsilon$ and, with the Lebesgue Differentiation Theorem, we get that for almost every $b\in\br$, 
		$$h(b+1/p)=h(b),$$
		thus $\Per(\hat f\cj{\hat g})$ has period $\frac1p$. 
	\end{proof}

	\begin{proposition}\label{pr5}
			Suppose $\Omega$ is spectral with pair measure $p\Leb_{\left[-\frac1{2p},\frac1{2p}\right]+\bz}$. Then $\Omega\cap(\Omega+j)=\ty$ up to measure zero, for all $j\in\bz$, $j\not\equiv 0\mod p$.
	\end{proposition}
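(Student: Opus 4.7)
The plan is to argue by contradiction: assume there exists $j \in \bz$ with $j \not\equiv 0 \pmod p$ and $|\Omega \cap (\Omega+j)| > 0$, and derive a contradiction from Proposition \ref{pr4} using a carefully chosen pair of test functions. Pick a measurable subset $E \subseteq \Omega \cap (\Omega+j)$ with $0 < |E| < \infty$, and set $f = \chi_E$ and $g = \chi_{E-j}$. Since $E \subset \Omega$ and $E - j \subset \Omega$ (the latter because $E \subset \Omega + j$), both $f$ and $g$ lie in $L^2(\Omega)$, so Proposition \ref{pr4} applies to them.

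A direct computation yields $\hat g(t) = e^{2\pi i j t} \hat f(t)$, hence $\hat f(t)\,\cj{\hat g(t)} = |\hat f(t)|^2 e^{-2\pi i j t}$. Because $j \in \bz$, the shift phase satisfies $e^{-2\pi i j k} = 1$ for every $k \in \bz$, so periodizing gives
$$\Per(\hat f\, \cj{\hat g})(t) = e^{-2\pi i j t}\, \Per(|\hat f|^2)(t).$$
By Proposition \ref{pr4} this function has period $1/p$, and comparing its values at $t$ and $t + \tfrac1p$ produces, for a.e.\ $t \in \br$,
$$\Per(|\hat f|^2)\!\left(t + \tfrac{1}{p}\right) = e^{2\pi i j/p}\, \Per(|\hat f|^2)(t).$$

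The left-hand side is a non-negative real number, while the scalar $e^{2\pi i j/p}$ on the right is \emph{not} a non-negative real, since it is a $p$-th root of unity different from $1$ (either strictly complex, or equal to $-1$ in the edge case where $p$ is even and $j \equiv p/2 \pmod p$). Hence the identity forces $\Per(|\hat f|^2)(t) = 0$ for a.e.\ $t$, which gives $\hat f = 0$ a.e.\ on $\br$ and therefore $f = 0$ a.e., contradicting $|E| > 0$. The delicate points are the choice of test functions so that both $f$ and $g$ lie in $L^2(\Omega)$ (which is precisely why one restricts $E$ to $\Omega \cap (\Omega+j)$), and the use of $j \in \bz$ to clear the $e^{-2\pi i j k}$ phases inside the periodization; once these are in place, the phase-versus-positivity step is automatic.
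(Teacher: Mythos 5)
Your proof is correct, and it reaches the contradiction by a different mechanism than the paper, even though both arguments rest on the same two ingredients: Proposition \ref{pr4} and the test functions $f=\chi_E$, $g=\chi_{E-j}$ with $E\subset\Omega\cap(\Omega+j)$ of finite positive measure. The paper first proves the stronger intermediate statement that $\ip{f}{T_jg}_{L^2(\br)}=0$ for \emph{all} $f,g\in L^2(\Omega)$: it applies Plancherel, splits $\br$ into the translates of $\left[-\frac1{2p},\frac1{2p}\right]$ by $\frac{l}{p}$, regroups $l=i+pm$, invokes the $\frac1p$-periodicity of $\Per(\hat f\cj{\hat g})$, and kills the result with the vanishing character sum $\sum_{i=0}^{p-1}e^{2\pi iji/p}=0$; only then does it specialize to $\chi_E,\chi_{E-j}$. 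You instead specialize immediately, observe $\hat g=e^{2\pi ijt}\hat f$, so that $\Per(\hat f\cj{\hat g})(t)=e^{-2\pi ijt}\Per(|\hat f|^2)(t)$, and then the $\frac1p$-periodicity from Proposition \ref{pr4} becomes the functional equation $\Per(|\hat f|^2)(t+\frac1p)=e^{2\pi ij/p}\Per(|\hat f|^2)(t)$ a.e.; since the left side is a non-negative real (finite a.e.\ because $\hat f\in L^2(\br)$, a point worth stating explicitly) and $e^{2\pi ij/p}$ is a nontrivial root of unity, positivity forces $\Per(|\hat f|^2)=0$ a.e., hence $\hat f=0$ and $|E|=0$, a contradiction. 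Your route is shorter and avoids the sum regrouping and the character-sum identity, trading them for a phase-versus-positivity argument; the paper's route is slightly longer but yields the cleaner structural fact that $L^2(\Omega)$ is orthogonal in $L^2(\br)$ to every translate $T_jL^2(\Omega)$ with $j\not\equiv0\ (\mathrm{mod}\ p)$, which is of independent interest.
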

	
	\begin{proof}
		Let $f,g\in L^2(\Omega)$. We compute, with Plancherel's identity,
		$$\ip{f}{T_{j}g}_{L^2(\br)}=\int_{\br}{\hat f(t)}\cj{e^{-2\pi i jt}\hat g(t)}\,dt=\sum_{l\in\bz}\int_{\left[-\frac1{2p},\frac1{2p}\right]}\hat f(t+l/p)e^{2\pi i j(t+l/p)}\cj{\hat g}(t+l/p)\,dt$$
		$$=\int_{\left[-\frac1{2p},\frac1{2p}\right]}\sum_{i=0}^{p-1}\sum_{m\in\bz}\hat f(t+i/p+m)\cj{\hat g}(t+i/p+m)e^{2\pi ij(t+i/p)}\,dt$$
		$$=\int_{\left[-\frac1{2p},\frac1{2p}\right]}\sum_{i=0}^{p-1}e^{2\pi i j(t+i/p)}\Per(\hat f \cj{\hat g})(t+i/p)\,dt$$
		with Propositon \ref{pr4},
		$$=\int_{\left[-\frac1{2p},\frac1{2p}\right]}\Per(\hat f \cj{\hat g})(t)e^{2\pi ijt}\sum_{i=0}^{p-1}e^{2\pi i ji/p}\,dt=0,$$
		since $j\not\equiv0(\mod p)$.
		
		Suppose now that $\Omega\cap (\Omega+j)$ has positive measure. Then take $E\subset\Omega\cap(\Omega+j)$ of finite measure. Let $f=\chi_{E}$ and $g=\chi_{E-j}$. Then $f,g\in L^2(\Omega)$ and $T_j g=f$ Thus $0=\ip{f}{T_jg}_{L^2(\br)}=\|f\|_{L^2(\br)}^2>0$, a contradiction . 
	\end{proof}
	
	We want to show that $\Omega$ tiles $\br$ by $\{0,\dots,p-1\}$. We will show next that $\Omega$ is contained in a larger set $\tilde \Omega$ which tiles $\br$ by $\{0,\dots,p-1\}$.

The set $\Omega'=\Omega+p\bz$ is clearly invariant under translations by $p\bz$ and we show that 
	\begin{equation}
		\label{eq5.1}
		\{\Omega'+j : j\in\{0,1,\dots,p-1\}\}\mbox{ are disjoint sets}.
	\end{equation}

	If not, there exist $k_1,k_2\in\bz$, $j_1,j_2\in\{0,\dots, p-1\}$ such that 
	$(\Omega+pk_1+j_1)\cap (\Omega+pk_2+j_2)\neq\ty$. We can assume  $j_1>j_2$ and then $j_1-j_2\in\{0,\dots,p-1\}$ and 
$(\Omega+p(k_1-k_2)+(j_1-j_2))\cap \Omega\neq \ty$, which contradicts Proposition \ref{pr5}.

	\begin{lemma}\label{lem6}
			Define  
		$$\Omega_0:=\Omega'\cap[0,p).$$
		\begin{enumerate}
			\item The sets $\{\Omega_0+k : k\in \bz\}$ are disjoint.
			\item $\Omega'=\Omega+p\bz=\Omega_0+p\bz$.
		\end{enumerate}		
	\end{lemma}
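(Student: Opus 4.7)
The plan is to handle part (ii) first, since it is essentially definitional, and then leverage it together with equation \eqref{eq5.1} to establish part (i).

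For (ii), the equality $\Omega' = \Omega + p\bz$ holds by definition of $\Omega'$. For $\Omega' = \Omega_0 + p\bz$, the inclusion $\Omega_0 + p\bz \subset \Omega'$ is immediate, since $\Omega_0 \subset \Omega'$ and $\Omega'$ is invariant under translations by $p\bz$. For the reverse inclusion I will take $x \in \Omega'$, apply Euclidean division by $p$ to write $x = x_0 + pn$ with $x_0 \in [0, p)$ and $n \in \bz$, and use the $p\bz$-invariance of $\Omega'$ to conclude that $x_0 = x - pn \in \Omega' \cap [0, p) = \Omega_0$, whence $x \in \Omega_0 + p\bz$.

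For (i), it suffices to show $\Omega_0 \cap (\Omega_0 + m) = \ty$ (up to measure zero) for every nonzero $m \in \bz$, since translating by a common $-k$ reduces the general disjointness to this statement. I will split into two cases. If $m = p\ell$ with $\ell \neq 0$, then $\Omega_0 \subset [0, p)$ and $\Omega_0 + m \subset [p\ell, p\ell + p)$ lie in disjoint intervals, so the intersection is empty. If $m \not\equiv 0 \pmod p$, I will write $m = pn + j$ with $j \in \{1, \dots, p-1\}$ (after replacing $m$ by $-m$ and symmetrizing if $j$ would otherwise be negative). Combining $\Omega_0 \subset \Omega'$ with the $p\bz$-invariance of $\Omega'$ yields
$$\Omega_0 \cap (\Omega_0 + m) \subset \Omega' \cap (\Omega' + m) = \Omega' \cap (\Omega' + j),$$
which is empty up to measure zero by \eqref{eq5.1}.

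No serious obstacle is expected: the argument is pure bookkeeping that packages \eqref{eq5.1} into the form needed for $\Omega_0$. The only minor subtleties are tracking the measure-zero caveat inherited from Proposition \ref{pr5} through \eqref{eq5.1}, and handling both signs of $m$ when reducing modulo $p$, both of which are routine.
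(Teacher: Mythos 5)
Your proposal is correct and follows essentially the same route as the paper: part (ii) is identical (Euclidean division plus $p\bz$-invariance of $\Omega'$), and part (i) uses the same two-case split, invoking \eqref{eq5.1} via $\Omega_0\subset\Omega'$ for translates incongruent mod $p$ and the containment $\Omega_0\subset[0,p)$ for translates by nonzero multiples of $p$. The only cosmetic difference is that you reduce to $\Omega_0\cap(\Omega_0+m)$ by translation, while the paper compares two general translates $\Omega_0+pl+j$ and $\Omega_0+pl'+j'$ directly.
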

	
	\begin{proof}
		Consider two sets $\Omega_0+pl+j$ and $\Omega_0+pl'+j'$ with $l,l'\in\bz$ and $j,j'\in\{0,\dots,p-1\}$. 
		If $j\neq j'$ then the intersection of these two sets is contained in $(\Omega'+j)\cap (\Omega'+j')=\ty$. 
		If $j=j'$ and the intersection of the two sets is nonempty, then $(\Omega_0+pl)\cap(\Omega_0+pl')\neq \ty$ and this is impossible because $\Omega_0$ is contained in $[0,p)$. This proves (i). 
		
		For (ii), we clearly have $\Omega_0+p\bz\subset \Omega+p\bz$. For the reverse inclusion, let $x\in \Omega'$. Then, there exists $k\in\bz$ such that $x-pk\in[0,p)$. Since $\Omega'$ is invariant under translations by $p\bz$, it follows that $x-pk\in\Omega'\cap[0,p)=\Omega_0$ and thus $x\in \Omega_0+pk$.
	\end{proof}
	
	\begin{lemma}
		\label{lem7}
		Define $R:=[0,1)\setminus ((\Omega_0+\bz)\cap [0,1))$. $\Omega_0\cup R$ tiles $\br$ by $\bz$ and $\tilde\Omega:=(\Omega_0\cup R)+p\bz$ tiles $\br$ by $\{0,\dots,p-1\}$ and contains $\Omega$.
	\end{lemma}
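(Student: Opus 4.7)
The plan is to verify the two tiling claims directly from the definitions, using Lemma \ref{lem6} as the main input, and then check containment as a last step.

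First, I would unpack $R$. By definition, $R \subset [0,1)$ and $R$ is disjoint from $(\Omega_0+\bz)\cap[0,1)$; since $\Omega_0+\bz$ is invariant under integer translations, this actually gives $R \cap (\Omega_0+\bz) = \ty$, and in particular $R \cap \Omega_0 = \ty$. Thus $[0,1)$ is partitioned (up to measure zero) into the two pieces $(\Omega_0+\bz)\cap[0,1)$ and $R$.

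Next I would show that $\Omega_0 \cup R$ tiles $\br$ by $\bz$. Translating the partition of $[0,1)$ above by $\bz$ yields $\br = ((\Omega_0+\bz)\cap[0,1)+\bz)\cup(R+\bz) = (\Omega_0+\bz)\cup(R+\bz)$, so $(\Omega_0\cup R)+\bz = \br$. For disjointness of $\{(\Omega_0\cup R)+k : k\in\bz\}$, I would check three cases: the $\Omega_0+k$ are mutually disjoint by Lemma \ref{lem6}(i); the $R+k$ are mutually disjoint since $R\subset[0,1)$; and $(\Omega_0+k_1)\cap(R+k_2) = \ty$ for all $k_1,k_2\in\bz$ because $R+k_2$ is disjoint from $\Omega_0+\bz$ by the first paragraph.

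Finally, I would upgrade to the desired tiling using the standard splitting $\bz = p\bz + \{0,1,\dots,p-1\}$. Since $\Omega_0\cup R$ tiles $\br$ by $\bz$, the set $\tilde\Omega = (\Omega_0\cup R)+p\bz$ satisfies $\tilde\Omega + \{0,\dots,p-1\} = \br$, and two translates $\tilde\Omega+j_1$, $\tilde\Omega+j_2$ with $j_1\neq j_2\in\{0,\dots,p-1\}$ would produce two distinct $\bz$-translates of $\Omega_0\cup R$ meeting, contradicting the $\bz$-tiling. For the containment, Lemma \ref{lem6}(ii) gives $\Omega \subset \Omega' = \Omega_0 + p\bz \subset (\Omega_0\cup R)+p\bz = \tilde\Omega$.

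No step looks like a serious obstacle here; the work is already done by Proposition \ref{pr5} and Lemma \ref{lem6}, and Lemma \ref{lem7} is essentially a bookkeeping argument that combines the $\bz$-tiling of $[0,1)$ with the $p\bz$-invariance of $\Omega'$. The only point requiring slight care is ensuring that the extra piece $R$ does not collide with any $\bz$-translate of $\Omega_0$, which is what motivated defining $R$ inside $[0,1)$ and as the complement of $\Omega_0+\bz$ there.
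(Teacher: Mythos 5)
Your proposal is correct and follows essentially the same route as the paper: establish that $R+\bz$ is disjoint from $\Omega_0+\bz$, combine this with Lemma \ref{lem6}(i) and $R\subset[0,1)$ to get the $\bz$-tiling by $\Omega_0\cup R$, then split $\bz=p\bz+\{0,\dots,p-1\}$ for the tiling of $\tilde\Omega$ by $\{0,\dots,p-1\}$, and finish with Lemma \ref{lem6}(ii) for the containment. No gaps.
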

	
	\begin{proof}
		Note that $\Omega_0+\bz$ and $R+\bz$ are disjoint. Otherwise, there exist $x_0\in\Omega_0$, $k\in\bz$, $r\in R$ and $l\in \bz$ such that $x_0+k=r+l$. So $r=x_0+(k-l)\in (\Omega_0+\bz)\cap[0,1)$, which contradicts the definition of $R$. The sets $\{R+k: k\in\bz\}$ are disjoint because $R$ is contained in $[0,1)$. The sets $\{\Omega_0+k : k\in\bz\}$ are disjoint, by Lemma \ref{lem6}. It follows that the sets $\{(\Omega_0\cup R)+k : k\in\bz\}$ are disjoint. Also, 
		$$(\Omega_0\cup R)+\bz=(\Omega_0+\bz)\cup (R+\bz)=((\Omega_0+\bz)\cup R)+\bz\supseteq[0,1)+\bz=\br.$$
		This shows that $\Omega_0\cup R$ tiles $\br$ by $\bz$.
		
		For $\tilde \Omega=(\Omega_0\cup R)+p\bz$, since $\Omega_0\cup R$ tiles $\br$ with $\bz$, we get that the sets $\{\tilde\Omega+j=(\Omega_0\cup R)+j+p\bz : j\in\{0,\dots,p-1\}\}$ are disjoint, and 
		$$\tilde\Omega+\{0,\dots,p-1\}=(\Omega_0\cup R)+p\bz+\{0,\dots,p-1\}=(\Omega_0\cup R)+\bz=\br.$$
		Thus, $\tilde\Omega$ tiles $\br$ by $\{0,\dots,p-1\}$.
		
		By Lemma \ref{lem6}, $\Omega\subseteq\Omega_0+p\bz\subseteq\tilde\Omega$.

	\end{proof}
	
	\begin{lemma}
		\label{lem8}
		$\Omega=\tilde\Omega$ and therefore $\Omega$ tiles $\br$ by $\{0,\dots,p-1\}$.
	\end{lemma}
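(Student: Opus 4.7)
The plan is to leverage Lemma \ref{lem7} together with the already-established direct implication of the theorem. Lemma \ref{lem7} gives us a set $\tilde\Omega \supseteq \Omega$ that tiles $\br$ by $\{0,1,\dots,p-1\}$. Applying Propositions \ref{iso} and \ref{pr3} to $\tilde\Omega$ (rather than to $\Omega$) shows that $\tilde\Omega$ is itself spectral with the \emph{same} pair measure $\mu := p\Leb_{\left[-\tfrac1{2p},\tfrac1{2p}\right]+\bz}$ as $\Omega$. So we have two isometric isomorphisms
\[
\mathscr F_\Omega : L^2(\Omega)\to L^2(\mu), \qquad \mathscr F_{\tilde\Omega}: L^2(\tilde\Omega)\to L^2(\mu),
\]
each given by restriction of the classical Fourier transform to $\left[-\tfrac1{2p},\tfrac1{2p}\right]+\bz$.

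Next, I would compare these two maps via the natural inclusion $L^2(\Omega)\hookrightarrow L^2(\tilde\Omega)$ that extends functions by zero on $\tilde\Omega\setminus\Omega$. Since the Fourier transform is computed by the same integral over $\br$ regardless of which set we regard the function as living on, for every $f\in L^2(\Omega)$ the zero-extension $\tilde f\in L^2(\tilde\Omega)$ satisfies $\mathscr F_{\tilde\Omega}(\tilde f)=\mathscr F_\Omega(f)$. Thus the image of the inclusion $L^2(\Omega)\hookrightarrow L^2(\tilde\Omega)$ maps under $\mathscr F_{\tilde\Omega}$ onto the image of $\mathscr F_\Omega$, which is all of $L^2(\mu)$ by surjectivity of $\mathscr F_\Omega$.

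Since $\mathscr F_{\tilde\Omega}$ is a bijection onto $L^2(\mu)$ and the image of the inclusion already exhausts the preimage of $L^2(\mu)$, it follows that every $F\in L^2(\tilde\Omega)$ is the zero-extension of some element of $L^2(\Omega)$; equivalently, every $F\in L^2(\tilde\Omega)$ vanishes a.e.\ on $\tilde\Omega\setminus\Omega$. Taking $F=\chi_{(\tilde\Omega\setminus\Omega)\cap[-N,N]}$ for each $N\in\bn$ forces $|\tilde\Omega\setminus\Omega|=0$. Combined with $\Omega\subseteq\tilde\Omega$ from Lemma \ref{lem7}, we conclude $\Omega=\tilde\Omega$ up to measure zero, and therefore $\Omega$ inherits from $\tilde\Omega$ the property of tiling $\br$ by $\{0,1,\dots,p-1\}$.

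The main (small) obstacle is simply the bookkeeping that $\mathscr F_\Omega$ and $\mathscr F_{\tilde\Omega}$ are literally the same operator when composed with the zero-extension embedding; once this is stated cleanly, the injectivity/surjectivity comparison is immediate and the measure-zero conclusion follows without further calculation.
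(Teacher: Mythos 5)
Your proposal is correct and follows essentially the same route as the paper: both apply the already-proved direct implication to $\tilde\Omega$ from Lemma \ref{lem7}, identify the restricted Fourier transform on $L^2(\Omega)$ with its restriction-of-the-same-operator on $L^2(\tilde\Omega)$, and conclude that $\tilde\Omega\setminus\Omega$ must be Lebesgue-null. The only cosmetic difference is that you phrase the final step as ``$\mathscr F_\Omega=\mathscr F_{\tilde\Omega}\circ\iota$ surjective and $\mathscr F_{\tilde\Omega}$ injective, hence the zero-extension $\iota$ is onto,'' whereas the paper argues by contradiction with a nonzero $g\in L^2(\tilde\Omega\setminus\Omega)$ whose image would be orthogonal to the whole of $L^2\left(\left[-\frac1{2p},\frac1{2p}\right]+\bz,p\,dx\right)$; these are the same Hilbert-space argument.
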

	
	\begin{proof}
		Suppose the inclusion $\Omega\subset\tilde\Omega$ is proper, so the complement $\tilde\Omega\setminus\Omega$ has positive Lebesgue measure. 
		
		Let $g\in L^2(\tilde\Omega\setminus\Omega)$, $g\neq 0$ and let $f\in L^2(\Omega)$. Then $f\perp g$ in $L^2(\tilde\Omega)$. Since $\tilde\Omega$ tiles $\br$ by $\{0,\dots,p-1\}$, by the direct implication of Theorem \ref{thm},  we know that the Fourier transform
		$$ L^2(\tilde\Omega)\ni f\to \hat f|_{\left[-\frac1{2p},\frac1{2p}\right]+\bz}=:\mathcal F_r f\in   L^2\left(\left[-\frac1{2p},\frac1{2p}\right]+\bz,p\,dx\right)$$
		is a surjective isometry. Therefore $\mathcal F_r f\perp \mathcal F_r g$, and also $\mathcal F_rg\neq 0$. 
		
		On the other hand, by hypothesis, $\mathcal F_r$ is a surjective isometry from $L^2(\Omega)$ to the same range $  L^2\left(\left[-\frac1{2p},\frac1{2p}\right]+\bz,p\,dx\right)$. Thus $\{\mathcal F_rf : f\in L^2(\Omega)\}$ is the entire range $  L^2\left(\left[-\frac1{2p},\frac 1{2p}\right]+\bz,p\,dx\right)$; therefore, $\mathcal F_rg$ is orthogonal to the entire range, and thus it is orthogonal to itself, so it has to be zero, a contradiction.

	\end{proof}
	
	In conclusion, $\Omega=\tilde\Omega$ and thus $\Omega$ tiles $\br$ with $\{0,\dots,p-1\}$, and this shows the converse implication in our Theorem \ref{thm}.
	\end{proof}

	{\bf Conflict of interest.} On behalf of all authors, the corresponding author states that there is no conflict of interest.
	\bibliographystyle{alpha}	
	
	{\bf Data Availability Statement.} No new data were created or analyzed in this study. Data sharing is not applicable to this article.
	
	{\bf Funding.} No funds, grants, or other support were received during the preparation of this manuscript.
	
	{\bf Author Contribution.} All authors contributed equally to the study conception, design, data collection, analysis, and manuscript preparation. Both authors read and approved the final manuscript.
	
	
	
	\bibliography{eframes}

@misc{Kol24,
	title={Orthogonal Fourier Analysis on Domains}, 
	author={Mihail N. Kolountzakis},
	year={2024},
	eprint={2408.15361},
	archivePrefix={arXiv},
	primaryClass={math.CA},
	url={https://arxiv.org/abs/2408.15361}, 
}

@article {Fug74,
	AUTHOR = {Fuglede, Bent},
	TITLE = {Commuting self-adjoint partial differential operators and a
	group theoretic problem},
	JOURNAL = {J. Functional Analysis},
	FJOURNAL = {Journal of Functional Analysis},
	VOLUME = {16},
	YEAR = {1974},
	PAGES = {101--121},
	ISSN = {0022-1236},
	MRCLASS = {47F05 (81.47)},
	MRNUMBER = {470754},
	DOI = {10.1016/0022-1236(74)90072-x},
	URL = {https://doi.org/10.1016/0022-1236(74)90072-x},
}

@article{CD25,
	title={Commuting self-adjoint extensions of the partial differential operators on disconnected sets}, 
	author={Piyali Chakraborty and Dorin Dutkay},
	year={2025},
	eprint={2506.23720},
	archivePrefix={arXiv},
	primaryClass={math.FA},
	url={https://arxiv.org/abs/2506.23720}, 
}

@article {Tao04,
	AUTHOR = {Tao, Terence},
	TITLE = {Fuglede's conjecture is false in 5 and higher dimensions},
	JOURNAL = {Math. Res. Lett.},
	FJOURNAL = {Mathematical Research Letters},
	VOLUME = {11},
	YEAR = {2004},
	NUMBER = {2-3},
	PAGES = {251--258},
	ISSN = {1073-2780},
	MRCLASS = {42B99 (43A45 46E30 52C22)},
	MRNUMBER = {2067470},
	MRREVIEWER = {B\'{e}la Uhrin},
	DOI = {10.4310/MRL.2004.v11.n2.a8},
	URL = {https://doi.org/10.4310/MRL.2004.v11.n2.a8},
}

@article {Ped87,
	AUTHOR = {Pedersen, Steen},
	TITLE = {Spectral theory of commuting selfadjoint partial differential
	operators},
	JOURNAL = {J. Funct. Anal.},
	FJOURNAL = {Journal of Functional Analysis},
	VOLUME = {73},
	YEAR = {1987},
	NUMBER = {1},
	PAGES = {122--134},
	ISSN = {0022-1236},
	MRCLASS = {35P05 (22D10 47F05)},
	MRNUMBER = {890659},
	DOI = {10.1016/0022-1236(87)90061-9},
	URL = {https://doi.org/10.1016/0022-1236(87)90061-9},
}

@article {JP98a,
	AUTHOR = {Jorgensen, Palle E. T. and Pedersen, Steen},
	TITLE = {Dense analytic subspaces in fractal {$L^2$}-spaces},
	JOURNAL = {J. Anal. Math.},
	FJOURNAL = {Journal d'Analyse Math\'{e}matique},
	VOLUME = {75},
	YEAR = {1998},
	PAGES = {185--228},
	ISSN = {0021-7670},
	MRCLASS = {46E30 (28A75 42C05 46L55 47B38)},
	MRNUMBER = {1655831},
	MRREVIEWER = {Javier Soria},
	DOI = {10.1007/BF02788699},
	URL = {https://doi.org/10.1007/BF02788699},
}

@article {FMM06,
	AUTHOR = {Farkas, B\'{a}lint and Matolcsi, M\'{a}t\'{e} and M\'{o}ra, P\'{e}ter},
	TITLE = {On {F}uglede's conjecture and the existence of universal
	spectra},
	JOURNAL = {J. Fourier Anal. Appl.},
	FJOURNAL = {The Journal of Fourier Analysis and Applications},
	VOLUME = {12},
	YEAR = {2006},
	NUMBER = {5},
	PAGES = {483--494},
	ISSN = {1069-5869},
	MRCLASS = {52C22 (20K01 42B10)},
	MRNUMBER = {2267631},
	DOI = {10.1007/s00041-005-5069-7},
	URL = {https://doi.org/10.1007/s00041-005-5069-7},
}

@article {LM22,
	AUTHOR = {Lev, Nir and Matolcsi, M\'{a}t\'{e}},
	TITLE = {The {F}uglede conjecture for convex domains is true in all
	dimensions},
	JOURNAL = {Acta Math.},
	FJOURNAL = {Acta Mathematica},
	VOLUME = {228},
	YEAR = {2022},
	NUMBER = {2},
	PAGES = {385--420},
	ISSN = {0001-5962},
	MRCLASS = {52A05 (28A75 42B10)},
	MRNUMBER = {4448683},
	MRREVIEWER = {Grigory M. Ivanov},
	DOI = {10.4310/acta.2022.v228.n2.a3},
	URL = {https://doi.org/10.4310/acta.2022.v228.n2.a3},
}
	
	\end{document}